\tikzstyle{vertex}=[circle, draw, inner sep=0pt, minimum size=6pt]
\newcommand{\vertex}{\node[vertex]}
\def\NZQ{\Bbb}               
\def\ZZ{{\NZQ Z}}
\def\B'c{{\mathcal{B'}}}
\def\U'c{{\mathcal{U'}}}
\def\opn#1#2{\def#1{\operatorname{#2}}} 
\opn\chara{char}
\opn\length{\ell}
\opn\projdim{proj\,dim}
\opn\injdim{inj\,dim}
\opn\ini{in}
\opn\rank{rank}
\opn\depth{depth}
\opn\sdepth{sdepth}
\opn\indmat{indmat}
\opn\cochord{cochord}
\opn\pdim{pdim}
\opn\height{ht}
\opn\embdim{emb\,dim}
\opn\codim{codim}
\opn\Tr{Tr}
\opn\bigrank{big\,rank}
\opn\superheight{superheight}\opn\lcm{lcm}
\opn\trdeg{tr\,deg}%
\opn\reg{reg}
\opn\lreg{lreg}
\opn\set{set}
\opn\supp{Supp}
\opn\shad{Shad}
\opn\div{div}
\opn\Div{Div}
\opn\cl{cl}
\opn\Cl{Cl}
\opn\Spec{Spec}
\opn\Supp{Supp}
\opn\supp{supp}
\opn\Sing{Sing}
\opn\Ass{Ass}
\opn\Min{Min}
\opn\size{size}
\opn\bigsize{bigsize}
\opn\lex{lex}
\opn\Ann{Ann}
\opn\Rad{Rad}
\opn\Soc{Soc}
\opn\Ker{Ker}
\opn\Coker{Coker}
\opn\Im{Im}
\opn\Hom{Hom}
\opn\Tor{Tor}
\opn\Ext{Ext}
\opn\End{End}
\opn\Aut{Aut}
\opn\id{id}
\opn\nat{nat}
\opn\GL{GL}
\opn\SL{SL}
\opn\mod{mod}
\opn\ord{ord}
\opn\aff{aff}
\opn\con{conv}
\opn\relint{relint}
\opn\st{st}
\opn\lk{lk}
\opn\cn{cn}
\opn\core{core}
\opn\vol{vol}
\opn\gr{gr}
\def\pot#1#2{#1[\kern-0.28ex[#2]\kern-0.28ex]}
\opn\dirlim{\underrightarrow{\lim}}
\opn\invlim{\underleftarrow{\lim}}
\let\tensor=\otimes
\def\pnt{{\raise0.5mm\hbox{\large\bf.}}}
\def\Implies{\ifmmode\Longrightarrow \else
	\unskip${}\Longrightarrow{}$\ignorespaces\fi}
\def\implies{\ifmmode\Rightarrow \else
	\unskip${}\Rightarrow{}$\ignorespaces\fi}
\def\iff{\ifmmode\Longleftrightarrow \else
	\unskip${}\Longleftrightarrow{}$\ignorespaces\fi}
\newtheorem{Theorem}{Theorem}[section]
\newtheorem{Lemma}[Theorem]{Lemma}
\newtheorem{Corollary}[Theorem]{Corollary}
\newtheorem{Proposition}[Theorem]{Proposition}
\newtheorem{Remark}[Theorem]{Remark}
\let\epsilon=\varepsilon
\let\phi=\varphi
\let\kappa=\varkappa
\numberwithin{equation}{section}
\title{Some Algebraic Invariants of the residue class rings of the edge ideals of perfect semiregular trees}
\author[Bakhtawar Shaukat]{Bakhtawar Shaukat}
\address{Bakhtawar Shaukat, School of Natural Sciences, National University of Sciences and Technology Islamabad, Sector H-12, Islamabad Pakistan.}
\email{bakhtawar.shaukat@sns.nust.edu.pk}
\author[Ahtsham ul Haq]{Ahtsham ul Haq}
\address{Ahtsham ul Haq, School of Natural Sciences, National University of Sciences and Technology Islamabad, Sector H-12, Islamabad Pakistan.}
\email{ahtsham2192@gmail.com}
\author[Muhammad Ishaq]{Muhammad Ishaq}
\address{Muhammad Ishaq, School of Natural Sciences, National University of Sciences and Technology Islamabad, Sector H-12, Islamabad Pakistan.}
\email{ishaq\_maths@yahoo.com}
\begin{document}
	\maketitle
	\begin{abstract}
		Let $S$ be a polynomial algebra over a field. If $I$ is the edge ideal of a perfect semiregular tree, then we give precise formulas for values of depth, Stanley depth, projective dimension, regularity and Krull dimension of $S/I$.\\\\ 
		\textbf{Key Words:} Monomial ideal; depth; Stanley depth; projective dimension; regularity; Krull dimension; perfect $n$-ary tree; perfect semiregular tree.\\ 
		\textbf{2010 Mathematics Subject Classification:} Primary: 13C15; Secondary: 13P10, 13F20.
	\end{abstract}
	
	\section*{Introduction}
	Let $S=K[x_1, \dots , x_n]$ be a polynomial ring over a field $K$. Let $G=(V(G),E(G))$ be a graph with $V(G)=\{x_1,\dots,x_n\}$ and edge set $E(G)$. Any ideal $I$ of $S$ generated by squarefree quadratic monomials can be viewed as the so-called \textit{edge ideal} $I(G)$ of the graph $G$ whose edges are the sets formed by two variables $x_i$, $x_j$ such that $x_ix_j$ is a generator of $I$. Let $M$ be finitely generated $\mathbb{Z}^n$-graded $S$-module. The $K$-subspace $uK[Z]$ which is generated by all elements of the form $uy$ where $u$ is a homogeneous element in $M,$ $y$ is a monomial in $K[Z]$ and $Z\subseteq\{x_1,\dots,x_n\}$. If $uK[Z]$ is a free $K[Z]$-module then it is called a Stanley space of dimension $|Z|$. A decomposition $\mathcal{D}$ of $K$-vector space $M$ as a finite direct sum of Stanley spaces is called a Stanley decomposition of $M$. Let $$\mathcal{D} \, : \, M = \bigoplus_{j=1}^r u_j K[Z_j],$$
	the Stanley depth of $\mathcal{D}$ is  $\sdepth(\mathcal{D})=\min\{|Z_j|:j=1,2,\dots,r\}$. The number
	$$\sdepth(M):=\max \{\sdepth(\mathcal{D})\,:\mathcal{D}\text{ is a Stanley decomposition of} \, M\},$$ is called the \textit{Stanley depth} of $M$. If $\mathfrak{m}:=(x_1,\dots,x_n)$, then the \textit{depth} of $M$ is defined to be the common length of all maximal $M$-sequences in $\mathfrak{m}$. 
In 1982	Stanley conjectured in  \cite{20} that $\sdepth{M}\geq \depth{M}$.  
	Later on, this conjecture was disproved in $2016$ by Duval et al. \cite{21}.   
	Herzog et al. showed in \cite{herz} that the invariant Stanley depth of $J/I$ is combinatorial in nature, where $I\subset J \subset S$ are monomial ideals. But interestingly it shares some properties and bounds with homological invariant depth; see for instance \cite{MC,fakh,fouliii,herz,susanmoreytree,AR1}. 
	
	For a finitely generated $\ZZ^n$-graded S-module $M$, there exists
an exact sequence of minimal possible length, called a minimal standard graded free resolution of M:
	$$0\longrightarrow\ \bigoplus_{j\in \ZZ} S(-j)^{\beta_{r,j}(M)} \longrightarrow \bigoplus_{j\in \ZZ}  S(-j)^{\beta_{r-1,j}(M)}
	\longrightarrow \dots 	\longrightarrow\bigoplus_{j\in \ZZ}  S(-j)^{\beta_{0,j}(M)}\longrightarrow M \longrightarrow\ 0.$$
	The numbers $\beta _{i,j}(M)$'s are called $(i,j)$-th graded Betti numbers of $M$; this number equals the number of minimal generators of degree $j$ in the $i^{th}$ syzygy module of $M$.
	There are certain invariants associated with a minimal graded free resolution $ M $. The \textit{Castelnuovo-Mumford regularity} (or simply \textit{regularity}) of a module $M$ denoted by $\reg(M)$, is given by
	\begin{equation*}
		\reg(M)=\max\{j-i:\beta_{i,j}(M)\neq 0\}.
	\end{equation*}
	The \textit{projective dimension} of module $M$ denoted by $\pdim(M)$, is given by 	
	\begin{equation*}
		\pdim(M)=\max\{i:\beta_{i,j}(M)\neq 0\}.\end {equation*}
		These two invariants measure the size of a resolution. For some more literature related to resolutions we refer the readers to \cite{regnpro,faridi,hermap,zheng}. The interplay of algebraic properties of $I(G)$ and graph-theoretic properties of $G$ is also of great interest; see for instance \cite{ susanmoreytree,moreyvir,book,woodroof}. For values bounds and some other interesting results related to the said invariants we refer the readers to  \cite{regul,naem,hibii,IQ,susanmoreytree,circulent}.
		
		The \textit{degree} of a vertex of a graph is the number of edges that are incident to that vertex. 
		Let $n\geq 2$. A \textit{path} of length $n-1$ on vertices $\{x_1, \dots,x_n\}$ is a
		graph denoted by $P_n$ such that $E(P_n)= \{\{x_i,x_{i+1}\}: 1\leq i\leq n-1\}$.
		A graph $T$ is said to be a \textit{tree} if there exists a unique path between any two vertices of
		$T$. A vertex of degree $1$ of a graph is called a \textit{pendant vertex} (or \textit{leaf}). A tree is called \textit{semiregular} when all of its non-pendant vertices have the same degree. 
		A \textit{rooted tree} is a tree in which one vertex has been designated the root.  The \textit{distance} between two vertices $x_i$ and $x_j$ in a graph is the shortest path between $x_i$ and $x_j$. An \textit{internal vertex} is a vertex that is not a leaf. If $k\geq2$, then a tree with one internal vertex and $k-1$ leaves incident on it is called a \textit{$k$-star}, we denote a $k$-star by $\mathbb{S}_k$.  
		
		In this paper, we define a new class of semiregular rooted trees. We call a semiregular rooted tree a \textit{perfect semiregular tree} if its all pendent vertices are at the same distance from the root. Let $n\geq2$ and $k\geq 1$. We denote a perfect semiregular tree by $T_{n,k}$, where $k$ and $n$ represent the distance of the pendent vertices from the root and degree of the non-pendent vertices, respectively.  For $n\geq 2$ and $k\geq 1$, a \textit{perfect $n$-ary tree} as defined in \cite{n-ary}, is a rooted tree whose root is of degree $n$, and all other internal vertices (if exist) are of degree $n+1$ and all leaves are at distance $k$ from the root (if $k=1$, then a perfect $n$-ary tree is just $\mathbb{S}_{n+1}$). A perfect $(n-1)$-ary tree is an induced subtree of $T_{n,k}$, we denote a perfect $(n-1)$-ary tree by $T'_{n,k}$.	See Figure 1 for examples of $T'_{n,k}$ and $T_{n,k}.$ 
		\begin{figure}[H]
			\centering
			\begin{subfigure}[b]{0.45\textwidth}
				\centering
				\[\begin{tikzpicture}[x=0.4cm, y=0.4cm]
					\vertex[fill] (1) at (0:0){};
					\vertex[fill] (2) at (240:2){};
					\vertex[fill] (3) at (300:2){};
					
					\vertex[fill] (4) at (225:4) {};
					\vertex[fill] (5) at (255:4) {};
					\vertex[fill] (6) at (285:4) {};
					\vertex[fill] (7) at (315:4) {};
					\vertex[fill] (38) at (217.5:6) {};
					\vertex[fill] (39) at (232.5:6) {};
					\vertex[fill] (40) at (247.5:6) {};
					\vertex[fill] (41) at (262.5:6) {};
					\vertex[fill] (42) at (277.5:6) {};
					\vertex[fill] (43) at (292.5:6) {};
					\vertex[fill] (44) at (307.5:6) {};
					\vertex[fill] (45) at (322.5:6) {};		
					\vertex[fill] (38a) at (213.75:8) {};
					\vertex[fill] (38b) at (221.25:8) {};
					\vertex[fill] (39a) at (228.75:8) {};
					\vertex[fill] (39b) at (236.25:8) {};
					\vertex[fill] (40a) at (243.75:8) {};
					\vertex[fill] (40b) at (251.25:8) {};
					\vertex[fill] (41a) at (258.75:8) {};
					\vertex[fill] (41b) at (266.25:8) {};
					\vertex[fill] (42a) at (273.75:8) {};
					\vertex[fill] (42b) at (281.25:8) {};
					\vertex[fill] (43a) at (288.75:8) {};
					\vertex[fill] (43b) at (296.25:8) {};
					\vertex[fill] (44a) at (303.75:8) {};
					\vertex[fill] (44b) at (311.25:8) {};
					\vertex[fill] (45a) at (318.75:8) {};
					\vertex[fill] (45b) at (326.25:8) {};
					
					\path 
					(1) edge (2)
					(1) edge (3)
					(2) edge (4)
					(2) edge (5)
					(3) edge (6)
					(3) edge (7)
					(38) edge (4)
					(40) edge (5)
					(42) edge (6)
					(44) edge (7)
					(39) edge (4)
					(41) edge (5)
					(43) edge (6)
					(45) edge (7)
					(38) edge (38a)
					(40) edge (40a)
					(42) edge (42a)
					(44) edge (44a)
					(39) edge (39a)
					(41) edge (41a)
					(43) edge (43a)
					(45) edge (45a)
					(38) edge (38b)
					(40) edge (40b)
					(42) edge (42b)
					(44) edge (44b)
					(39) edge (39b)
					(41) edge (41b)
					(43) edge (43b)
					(45) edge (45b);
				\end{tikzpicture}\]
			\end{subfigure}
			\hfill
			\begin{subfigure}[b]{0.5\textwidth}
				\centering
				\[\begin{tikzpicture}[x=0.3cm, y=0.3cm]
					\vertex[fill] (0) at (0:0){};
					\vertex[fill] (1) at (15:3) {};
					\vertex[fill] (2) at (105:3) {};
					\vertex[fill] (3) at (195:3) {};	
					\vertex[fill] (3a) at (285:3) {};	
					\vertex[fill] (10) at (315:6)  {};
					\vertex[fill] (11) at (345:6) {};	
					\vertex[fill] (12) at (15:6) {};
					\vertex[fill] (13) at (45:6) {};
					\vertex[fill] (14) at (75:6) {};
					\vertex[fill] (15) at (105:6){};
					\vertex[fill] (16) at (135:6) {};
					\vertex[fill] (17) at (165:6) {};
					\vertex[fill] (18) at (195:6) {};
					\vertex[fill] (19) at (225:6) {};
					\vertex[fill] (20) at (255:6) {};
					\vertex[fill] (21) at (285:6) {};
					\vertex[fill] (22) at (307.5:9) {};
					\vertex[fill] (a) at (315:9) {};
					\vertex[fill] (23) at (322.5:9) {};
					\vertex[fill] (24) at (337.5:9) {};
					\vertex[fill] (b) at (345:9) {};
					\vertex[fill] (25) at (352.5:9) {};
					\vertex[fill] (26) at (7.5:9) {};
					\vertex[fill] (c) at (15:9) {};
					\vertex[fill] (27) at (22.5:9) {};
					\vertex[fill] (28) at (37.5:9) {};
					\vertex[fill] (d) at (45:9) {};
					\vertex[fill] (29) at (52.5:9) {};
					\vertex[fill] (30) at (67.5:9) {};
					\vertex[fill] (e) at (75:9) {};
					\vertex[fill] (31) at (82.5:9) {};
					\vertex[fill] (32) at (97.5:9){};
					\vertex[fill] (f) at (105:9) {};
					\vertex[fill] (33) at (112.5:9) {};
					\vertex[fill] (34) at (127.5:9) {};
					\vertex[fill] (g) at (135:9) {};
					\vertex[fill] (35) at (142.5:9) {};
					\vertex[fill] (36) at (157.5:9) {};
					\vertex[fill] (h) at (165:9) {};
					\vertex[fill] (37) at (172.5:9) {};
					\vertex[fill] (38) at (187.5:9) {};
					\vertex[fill] (i) at (195:9) {};
					\vertex[fill] (39) at (202.5:9) {};
					\vertex[fill] (40) at (217.5:9) {};
					\vertex[fill] (j) at (225:9) {};
					\vertex[fill] (41) at (232.5:9) {};
					\vertex[fill] (42) at (247.5:9) {};
					\vertex[fill] (k) at (255:9) {};
					\vertex[fill] (43) at (262.5:9) {};
					\vertex[fill] (44) at (277.5:9) {};
					\vertex[fill] (l) at (285:9) {};
					\vertex[fill] (45) at (292.5:9) {};	
					
					\path 
					(0) edge (1)
					(0) edge (2)
					(0) edge (3)
					(0) edge (3a)
					(11) edge (1)
					(14) edge (2)
					(17) edge (3)
					(20) edge (3a)
					(12) edge (1)
					(15) edge (2)
					(18) edge (3)
					(21) edge (3a)
					(13) edge (1)
					(16) edge (2)
					(19) edge (3)
					(10) edge (3a)
					(22) edge (10)
					(24) edge (11)
					(26) edge (12)
					(28) edge (13)
					(30) edge (14)
					(32) edge (15)
					(34) edge (16)
					(36) edge (17)
					(38) edge (18)
					(40) edge (19)
					(42) edge (20)
					(44) edge (21)
					(a) edge (10)
					(b) edge (11)
					(c) edge (12)
					(d) edge (13)
					(e) edge (14)
					(f) edge (15)
					(g) edge (16)
					(h) edge (17)
					(i) edge (18)
					(j) edge (19)
					(k) edge (20)
					(l) edge (21)
					(23) edge (10)
					(25) edge (11)
					(27) edge (12)
					(29) edge (13)
					(31) edge (14)
					(33) edge (15)
					(35) edge (16)
					(37) edge (17)
					(39) edge (18)
					(41) edge (19)
					(43) edge (20)
					(45) edge (21);
				\end{tikzpicture}\]
			\end{subfigure}
			\caption{From left to right perfect $2$-ary tree $T'_{3,4}$ and  perfect semiregular tree $T_{4,3}.$}
		\end{figure}
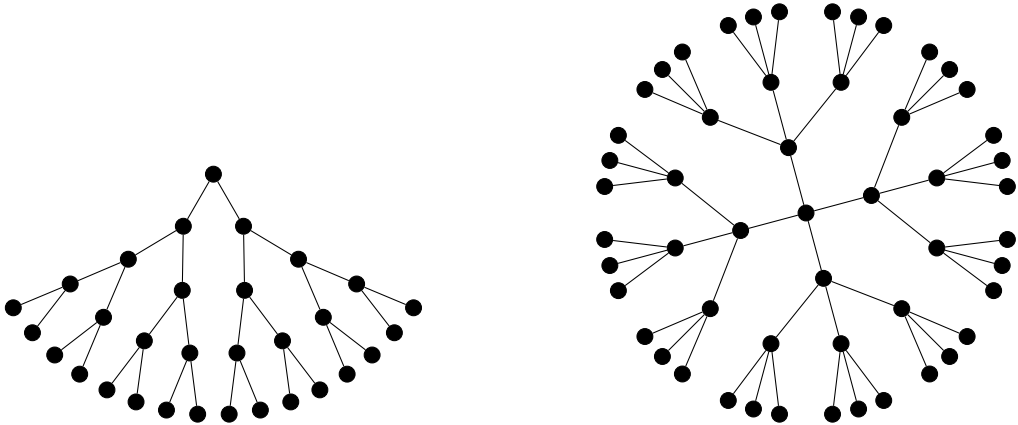
		
	Recently, several types of research have been conducted to determine the precise values and bounds for several algebraic invariants for different classes of monomial ideals and their residue class rings; see for instance \cite{zahidishaq,MI,susanmoreytree,andi,circulent}.
		This paper aims to find the precise formulas for the values of the algebraic invariants depth, Stanley depth, regularity, projective dimension and Krull dimension of $K[V(T_{n,k})]/I({T_{n,k}})$. 
		It is worth mentioning that for computations of the said invariants for $K[V(T_{n,k})]/I(T_{n,k})$ the module $K[V(T'_{n,k})]/I(T'_{n,k})$ plays a vital role.
		It is easy to see that $T_{2,k}=P_{2k+1}$ and $T_{n,1}=\mathbb{S}_{n+1}$, thus our results also complement the previous work on depth, Stanley depth, regularity and projective dimension  of $K[V(P_n)]/I(P_n)$ and $K[V(\mathbb{S}_n)]/I(\mathbb{S}_n)$; see \cite{AA,regnpro,susanmoreytree, alinsdepth}.  We gratefully acknowledge the use of CoCoA \cite{cocoa}, Macaulay 2 \cite{macaulay} and MATLAB\,\textsuperscript{\tiny\textregistered}.
		\section{Preliminaries}
		In this section,  a few basic notions from  Graph Theory and auxiliary results from  Commutative Algebra are reviewed. For a graph $G$, a subset $W$ of $V(G)$ is called an \textit{independent set} if no two vertices in $W$ are adjacent. The carnality of a largest independent
		set of $G$ is called the $independence \ number$ of $G$. A \textit{subgraph} $H$ of a graph $G$, written as $H \subseteq G$, is a graph such that $V(H) \subseteq V(G)$ and  $E(H) \subseteq E(G)$.
		For a subset $ U\subseteq V(G)$, an \textit{induced subgraph} of $G$ is a graph $G':=(U, E( G'))$, such that $E( G')= \{\{x_i,x_j\}\in E(G): \{x_i,x_j\}\subseteq U\} $.
		A \textit{matching} in a graph $G$ is a subset $M$ of $E(G)$ in which no two edges are adjacent. An \textit{induced matching} in $G$ is a matching that forms an induced subgraph of $G$. An $induced \ matching\ number$ of $G$ is denoted as $\indmat(G)$ and  defined as
		$$\indmat(G)=\max \{|M|: \, \, \text{$M$ is an induced matching in $G$}\}.$$ 	
		
	If $G$ is a finite simple graph, Katzman proved in \cite[Lemma 2.2]{kat} that $\indmat(G)$ is a lower bound for the regularity of $S/I(G)$ and then Hà et al. proved in \cite[Corollary  6.9]{hanvan} that regularity of $S/I(G)$ is equal to the $\indmat(G)$ if $G$ is a chordal graph. We combine these results in the following lemma.	\begin{Lemma}\label{reg1}
			Let $G$ be a finite simple graph. Then 
				 $\reg(S/I(G))\geq \indmat(G).$ 
				Moreover, If $G$ is a \text{chordal graph, then} $\reg(S/I(G))=\indmat(G) .$ 
				
		\end{Lemma}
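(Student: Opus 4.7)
The statement is a verbatim concatenation of two results from the literature, so the plan is simply to invoke both in succession. The lower bound $\reg(S/I(G))\ge\indmat(G)$, valid for any finite simple graph, is \cite[Lemma~2.2]{kat}, while the equality in the chordal case is \cite[Corollary~6.9]{hanvan}.

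If I had to reprove the lower bound from scratch, the natural route is Hochster's formula. Choose an induced matching $\{x_{i_1}y_{i_1},\ldots,x_{i_k}y_{i_k}\}$ realising $k=\indmat(G)$ and let $W$ be the set of $2k$ matched vertices. Because the matching is induced, the restriction of the independence complex $\Delta(G)$ to $W$ is the join of $k$ copies of a $0$-sphere, whose reduced homology in degree $k-1$ is nonzero; Hochster's formula then forces $\beta_{k,2k}(S/I(G))\neq 0$, and so $\reg(S/I(G))\ge 2k-k=k$. This is the easier half.

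For the reverse inequality in the chordal case, I would induct on $|V(G)|$ using a simplicial vertex $v$ (whose existence is guaranteed by Dirac's theorem) together with the short exact sequence
\[
0 \longrightarrow S/(I(G):v) \longrightarrow S/I(G) \longrightarrow S/(I(G),v) \longrightarrow 0.
\]
Both end terms collapse, after eliminating the variables that now appear linearly, to $S/I(G\setminus N[v])$ and $S/I(G\setminus v)$, which are again chordal; the inductive hypothesis applies and the standard regularity estimate for a short exact sequence gives $\reg(S/I(G))\le\max\{\indmat(G\setminus N[v])+1,\indmat(G\setminus v)\}$. The step I expect to be the main obstacle, and the heart of \cite{hanvan}, is bounding this maximum by $\indmat(G)$; the key combinatorial observation is that, since $v$ is simplicial, it sits in a clique, so any induced matching of $G\setminus N[v]$ can be augmented by a single edge at $v$ to yield an induced matching of $G$. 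Because this bookkeeping is already carried out cleanly in \cite{hanvan}, the most efficient execution is simply to quote their corollary.
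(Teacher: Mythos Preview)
Your proposal is correct and matches the paper's own treatment: the paper does not prove this lemma but simply records it as the combination of \cite[Lemma~2.2]{kat} and \cite[Corollary~6.9]{hanvan}, exactly as you propose. Your additional sketches of the Hochster-formula argument for the lower bound and the simplicial-vertex induction for the chordal upper bound are sound and go beyond what the paper does, but they are not needed here.
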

	\begin{Lemma}[{\cite[Lemma 1]{dim}}]\label{prok}
			$\dim (S/I(G))=\max\{|W|:\text{ $W$ is an independent set of $G$}\}.$
		\end{Lemma}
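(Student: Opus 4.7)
The plan is to combine the primary decomposition of the squarefree monomial ideal $I(G)$ with the elementary combinatorial duality between vertex covers and independent sets of $G$. Write $n=|V(G)|$, so $S=K[x_1,\dots,x_n]$.

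First, since $I(G)$ is generated by squarefree monomials, it is a radical ideal and equals the intersection of its minimal primes. I would establish the standard description of these primes: they are precisely the ideals $P_C:=(x_i:x_i\in C)$ as $C$ ranges over the minimal vertex covers of $G$. Indeed, $x_ix_j\in P_C$ is equivalent to $C$ meeting the edge $\{x_i,x_j\}$, so $I(G)\subseteq P_C$ if and only if $C$ covers every edge of $G$, and minimality in this family corresponds to minimality of $C$ under inclusion. This yields
$$I(G)=\bigcap_{C\ \text{minimal vertex cover of } G} P_C.$$

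Next, from the general identity $\dim(S/I(G))=\max\{\dim(S/P):P\text{ minimal prime of }I(G)\}$, together with $\dim(S/P_C)=n-|C|$ (since $P_C$ is generated by $|C|$ of the $n$ variables), we obtain $\dim(S/I(G))=n-c(G)$, where $c(G)$ denotes the smallest cardinality of a vertex cover of $G$.

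Finally, I would invoke the elementary observation that $W\subseteq V(G)$ is an independent set if and only if $V(G)\setminus W$ is a vertex cover. This gives a bijection between independent sets and vertex covers that sends a set of size $k$ to one of size $n-k$, so $\max\{|W|:W\text{ independent in }G\}=n-c(G)$, matching the previous computation. The only non-formal ingredient is the identification of the minimal primes of $I(G)$ with the minimal vertex covers of $G$; the remainder is routine bookkeeping with dimensions of polynomial quotients.
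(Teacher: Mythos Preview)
Your argument is correct and is the standard proof of this fact: identify the minimal primes of $I(G)$ with monomial prime ideals generated by minimal vertex covers, compute the Krull dimension as $n$ minus the minimum size of a vertex cover, and then use the complement bijection between vertex covers and independent sets. There is nothing to criticize mathematically.

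As for comparison with the paper: the paper does not actually prove this lemma. It is stated with a citation to \cite{dim} (Hirano--Matsuda) and used as a black box, so there is no ``paper's own proof'' to compare against. Your write-up supplies exactly the kind of self-contained justification the paper omits, and it is the same argument one finds in standard references (e.g., Villarreal's \emph{Monomial Algebras}).
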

\noindent
An interesting property of depth, Stanley depth and regularity is that when we add new variables to the ring then depth and Stanley depth will also increase {\cite[Lemma 3.6]{herz}}, while regularity will remain the same {\cite[Lemma 3.5]{moreyvir}}. These results are summarized in the following lemma. 
		\begin{Lemma}\label{le3}
			Let $J\subset S$ be a monomial ideal, and $\bar{S}=S \tensor_{K} K[x_{n+1}]$ a polynomial ring of $n+1$ variables. Then  $\depth(\bar{S}/J)=\depth(S/J)+1,   \sdepth(\bar{S}/J)=\sdepth(S/J)+1$ and   $\reg({\bar{S}/J})=\reg({S/J}).$  
		\end{Lemma}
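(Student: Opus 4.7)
The statement is a compilation of two citations already given in the excerpt, so the plan is not to reprove the lemma from scratch but to record how each of the three equalities reduces to a short standard argument, handled independently. The common underlying fact driving all three is that $\bar{S}/J \iso (S/J)\tensor_K K[x_{n+1}]$ (since by hypothesis $J$ involves only $x_1,\dots,x_n$), and consequently $x_{n+1}$ is a nonzerodivisor on $\bar{S}/J$.

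For the depth equality, I would invoke the standard characterization via regular sequences. Because $x_{n+1}$ is $\bar{S}/J$-regular and $\bar{S}/(J+(x_{n+1})) \iso S/J$ as $S$-modules, any maximal $S/J$-sequence in $(x_1,\dots,x_n)$ prepended with $x_{n+1}$ yields a maximal $\bar{S}/J$-sequence in $(x_1,\dots,x_{n+1})$, and conversely any maximal $\bar{S}/J$-sequence can be rearranged to begin with $x_{n+1}$ modulo a unit adjustment. This forces $\depth(\bar{S}/J)=\depth(S/J)+1$.

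For the regularity equality, the key observation is that the minimal graded free resolution of $S/J$ over $S$ stays exact and minimal after applying the flat base change $-\tensor_K K[x_{n+1}]$, yielding the minimal graded free resolution of $\bar{S}/J$ over $\bar{S}$. Hence $\beta_{i,j}(\bar{S}/J)=\beta_{i,j}(S/J)$ for all $i,j$, and the formula $\reg(M)=\max\{j-i:\beta_{i,j}(M)\neq 0\}$ gives the claim.

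For the Stanley depth equality, the easy direction is constructive: given $\mathcal{D}\,:\,S/J=\bigoplus_{j=1}^r u_j K[Z_j]$, the decomposition $\bar{S}/J=\bigoplus_{j=1}^r u_j K[Z_j\cup\{x_{n+1}\}]$ is a valid Stanley decomposition, which proves $\sdepth(\bar{S}/J)\geq\sdepth(S/J)+1$. The reverse inequality is the subtle part and would be the main obstacle if one attempted a self-contained proof: it requires showing that any Stanley decomposition of $\bar{S}/J$ can, by a pigeonhole/refinement argument on the Stanley spaces, be rearranged so that $x_{n+1}$ belongs to every $Z_j$, after which one recovers a decomposition of $S/J$ with each $|Z_j|$ dropped by one. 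Fortunately this is exactly the content of \cite[Lemma 3.6]{herz}, so I would simply cite it. Combining the three pieces and quoting \cite[Lemma 3.5]{moreyvir} for the regularity half completes the lemma.
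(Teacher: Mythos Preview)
Your proposal is correct and matches the paper's approach: the lemma is stated there without proof, merely as a summary of the two cited results \cite[Lemma 3.6]{herz} (for depth and Stanley depth) and \cite[Lemma 3.5]{moreyvir} (for regularity). Your additional sketches of the underlying arguments are accurate and more informative than what the paper provides, but the essential content---reducing the three equalities to those two citations---is identical.
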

\noindent 
It is obvious and well known that $\depth(S)=\sdepth(S)=n$ and $\reg(S)=0$. 
	\begin{Lemma}[{\cite[Theorems 2.6 and 2.7]{AA}}]\label{leAli}
			If $n\geq 2$ and $I(\mathbb{S}_{n})\subset S:=K[V(\mathbb{S}_{n})]$, then $$\depth(S/I(\mathbb{S}_{n}))=\sdepth(S/I(\mathbb{S}_{n}))=1.$$
		\end{Lemma}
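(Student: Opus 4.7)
The plan is to sandwich both $\depth(S/I(\mathbb{S}_n))$ and $\sdepth(S/I(\mathbb{S}_n))$ between $1$ and $1$. Label the center of $\mathbb{S}_n$ as $x_1$ and its leaves as $x_2,\dots,x_n$, so that $I(\mathbb{S}_n)=(x_1x_2,\ldots,x_1x_n)=x_1\cdot(x_2,\ldots,x_n)$, with primary decomposition $I(\mathbb{S}_n)=(x_1)\cap(x_2,\ldots,x_n)$ and hence $\Ass(S/I(\mathbb{S}_n))=\{(x_1),(x_2,\ldots,x_n)\}$.

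For the upper bounds I would exploit $\pp=(x_2,\ldots,x_n)\in\Ass(S/I(\mathbb{S}_n))$: since $S/\pp\cong K[x_1]$ is one-dimensional, the classical inequality $\depth(M)\le\dim(S/\pp)$ together with its Stanley-depth analogue $\sdepth(M)\le\dim(S/\pp)$ (valid for every $\pp\in\Ass(M)$) yield $\depth(S/I(\mathbb{S}_n))\le 1$ and $\sdepth(S/I(\mathbb{S}_n))\le 1$.

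For the depth lower bound I would feed the short exact sequence
$$0\longrightarrow S/(I:x_1)\xrightarrow{\;\cdot x_1\;} S/I\longrightarrow S/(I,x_1)\longrightarrow 0,$$
with $(I:x_1)=(x_2,\ldots,x_n)$ and $(I,x_1)=(x_1)$, into the Depth Lemma: the outer modules $K[x_1]$ and $K[x_2,\ldots,x_n]$ have depths $1$ and $n-1\ge 1$ respectively (here the hypothesis $n\ge 2$ is used), forcing $\depth(S/I(\mathbb{S}_n))\ge 1$.

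For the Stanley-depth lower bound I would exhibit an explicit Stanley decomposition. A monomial $x_1^{a_1}\cdots x_n^{a_n}$ represents a nonzero class in $S/I(\mathbb{S}_n)$ exactly when $a_1=0$, or $a_j=0$ for every $j\ge 2$. Splitting the $K$-basis according to these two disjoint cases yields
$$S/I(\mathbb{S}_n)\;=\;K[x_2,\ldots,x_n]\;\oplus\;x_1\,K[x_1],$$
a Stanley decomposition of Stanley depth $\min\{n-1,1\}=1$ for $n\ge 2$, so $\sdepth(S/I(\mathbb{S}_n))\ge 1$. Combining the four bounds gives the claim. The only mildly delicate ingredient is the Stanley-depth analogue of the associated-prime bound; the remaining steps are purely combinatorial bookkeeping on the monomials of $S/I(\mathbb{S}_n)$.
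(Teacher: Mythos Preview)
The paper does not supply its own proof of this lemma; it simply quotes the result from \cite[Theorems~2.6 and~2.7]{AA}. Your argument is a correct, self-contained proof. The one point you flag as ``mildly delicate'' --- the Stanley-depth analogue of the associated-prime bound --- is in fact immediately available from the tools already recorded in the paper: since $(I(\mathbb{S}_n):x_1)=(x_2,\dots,x_n)$, Lemma~\ref{Cor7} gives
\[
\sdepth(S/I(\mathbb{S}_n))\le \sdepth\bigl(S/(x_2,\dots,x_n)\bigr)=\sdepth(K[x_1])=1,
\]
so you could even replace the appeal to the general associated-prime inequality by this one-line colon-ideal computation, matching the style of the surrounding arguments in the paper. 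Everything else (the primary decomposition, the Depth Lemma step, and the explicit Stanley decomposition $K[x_2,\dots,x_n]\oplus x_1K[x_1]$) is entirely straightforward and correct.
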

\noindent In the next lemma, we recall an inequality from Depth Lemma {\cite[Proposition 1.2.9]{depth}} and a similar result for Stanley depth proved by Rauf {\cite[Lemma 2.2]{AR1}}. 
		\begin{Lemma}\label{rle2}	
			If $0\rightarrow U \rightarrow V \rightarrow W \rightarrow 0$ is a short exact sequence of $\mathbb{Z}^n$-graded $S$-module, then
		$$\depth (V) \geq \min\{\depth(W), \depth(U)\},$$
				 	$$	\sdepth(V)\geq\min\{\sdepth(U), \sdepth(W)\}.$$
		\end{Lemma}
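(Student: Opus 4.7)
The plan is to establish the two inequalities by independent methods, since the first is purely homological while the second is combinatorial.

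For the depth inequality, I would apply $\Hom_S(K,-)$ to the short exact sequence to obtain the long exact sequence
$$\cdots \to \Ext^{i}_S(K,U) \to \Ext^{i}_S(K,V) \to \Ext^{i}_S(K,W) \to \Ext^{i+1}_S(K,U) \to \cdots$$
and invoke the standard characterization $\depth(M)=\min\{i:\Ext^{i}_S(K,M)\neq 0\}$ for a finitely generated $\ZZ^n$-graded $S$-module $M$. Setting $d=\min\{\depth(U),\depth(W)\}$, both outer terms of the three-term piece $\Ext^{i}_S(K,U)\to\Ext^{i}_S(K,V)\to\Ext^{i}_S(K,W)$ vanish for every $i<d$, forcing $\Ext^{i}_S(K,V)=0$ and hence $\depth(V)\geq d$.

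For the Stanley depth inequality, I would exhibit a Stanley decomposition of $V$ by hand. Fix decompositions
$$\mathcal{D}_U:\ U=\bigoplus_{i} u_i K[Z_i],\qquad \mathcal{D}_W:\ W=\bigoplus_{j} w_j K[Y_j]$$
that realize $\sdepth(U)$ and $\sdepth(W)$. Because the surjection $\pi\colon V\to W$ is $\ZZ^n$-graded, each $w_j$ admits a homogeneous preimage $\tilde w_j\in V$ of the same multidegree. The candidate decomposition is then
$$\mathcal{D}_V:\ V=\bigoplus_{i} u_i K[Z_i]\;\oplus\;\bigoplus_{j}\tilde w_j K[Y_j],$$
whose Stanley depth equals $\min\{\sdepth(\mathcal{D}_U),\sdepth(\mathcal{D}_W)\}=\min\{\sdepth(U),\sdepth(W)\}$. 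This yields the claimed bound as soon as we certify that $\mathcal{D}_V$ is genuinely a Stanley decomposition.

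The crux and the main obstacle is verifying the three defining properties of $\mathcal{D}_V$. Freeness of each $\tilde w_j K[Y_j]$ over $K[Y_j]$ follows because any relation $\tilde w_j f=0$ pushes forward to $w_j f=0$ in $W$ and then $f=0$ by $\mathcal{D}_W$. Directness of the total sum follows because any vanishing combination $\sum_j \tilde w_j f_j + u=0$ with $u\in U$ maps under $\pi$ to $\sum_j w_j f_j=0$, which forces $f_j=0$ by $\mathcal{D}_W$ and then $u=0$ by $\mathcal{D}_U$. Spanning follows because, for any $v\in V$, the image $\pi(v)$ decomposes in $\mathcal{D}_W$, so subtracting the corresponding lift of that decomposition from $v$ leaves an element of $U=\Ker\pi$, which is handled by $\mathcal{D}_U$. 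The argument relies essentially on the $\ZZ^n$-grading in order to select homogeneous preimages of the same multidegree as $w_j$, which is what makes the combinatorial lift compatible with Stanley decompositions.
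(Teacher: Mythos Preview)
Your proposal is correct. The paper does not prove this lemma at all; it simply cites the depth inequality from the Depth Lemma \cite[Proposition~1.2.9]{depth} and the Stanley depth inequality from Rauf \cite[Lemma~2.2]{AR1}. Your two arguments are precisely the standard proofs found in those references: the long exact sequence in $\Ext$ for depth, and the lift-and-splice construction of a Stanley decomposition of $V$ from chosen decompositions of $U$ and $W$ for Stanley depth. So there is nothing to compare---you have supplied the proofs the paper merely cites.
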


	 Let $1\leq r < n.$ If  $I\subset S_{1}=K[x_{1},\dots, x_{r}]$ and $J\subset S_{2}=K[x_{r+1},\dots, x_{n}]$ are monomial ideals, then by {\cite[Proposition 2.2.20]{book}} we have
		$S/(I+J)\cong S_{1}/I \tensor_{K}S_{2}/J.$
		Thus we get $\depth(S/(I+J))=\depth(S_{1}/I \tensor_{K}S_{2}/J)$  and $\sdepth(S/(I+J))=\sdepth(S_{1}/I \tensor_{K}S_{2}/J).$ Now applying {\cite[Proposition 2.2.21]{book}} for depth, and Lemma {\cite[Theorem 3.1]{AR1}} for Stanley depth, we have the following useful lemma.
		\begin{Lemma}\label{LEMMA1.5}
			$ \depth_{S}(S_{1}/I\tensor_{K}S_{2}/J)= \depth_{S}(S/(I+J))=\depth_{S_{1}}(S_{1}/I)+\depth_{S_{2}}(S_{2}/J)$ and 	$ \sdepth_{S}(S_{1}/I \tensor_{K} S_{2}/J) \geq \sdepth_{S_{1}}(S_{1}/I)+\sdepth_{S_{2}}(S_{2}/J).$
		\end{Lemma}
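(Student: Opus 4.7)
The plan is to piece this lemma together directly from the three citations invoked in the paragraph just above the statement, using the tensor product decomposition as the bridge between the two formulations $S/(I+J)$ and $S_1/I \otimes_K S_2/J$.

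First, I would record the isomorphism $S/(I+J)\cong S_{1}/I \otimes_{K}S_{2}/J$ coming from {\cite[Proposition 2.2.20]{book}}. This depends only on the fact that $I\subset S_1$ and $J\subset S_2$ live in disjoint sets of variables, so $I+J$ is a sum of extensions of ideals from complementary subrings. Because depth and Stanley depth of a $\mathbb{Z}^n$-graded $S$-module are computed intrinsically from the $S$-action, the isomorphism above immediately equates the two descriptions, giving the first equality in the depth chain and the identification of the modules in the Stanley depth inequality.

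For the depth part, I would then invoke {\cite[Proposition 2.2.21]{book}}, which states exactly the additivity formula $\depth_{S}(S_{1}/I\otimes_{K}S_{2}/J)=\depth_{S_{1}}(S_{1}/I)+\depth_{S_{2}}(S_{2}/J)$ for tensor products of modules over polynomial rings in disjoint variables. Chaining this with the isomorphism of the previous step completes the chain of equalities for depth. For the Stanley depth part, I would apply {\cite[Theorem 3.1]{AR1}} of Rauf, which provides the analogous lower bound $\sdepth_{S}(S_{1}/I\otimes_{K}S_{2}/J)\geq \sdepth_{S_{1}}(S_{1}/I)+\sdepth_{S_{2}}(S_{2}/J)$; note that here one only obtains an inequality and not an equality, which is why the statement of the lemma distinguishes the two cases.

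There is no real obstacle: the lemma is a direct packaging of three results for convenient reference later in the paper. The only thing one must be mildly careful about is checking that depth and Stanley depth are being computed over the correct ring, but since $S\cong S_1\otimes_K S_2$ and the module is finitely generated over $S$, the change-of-rings issue is harmless and is already handled by the cited propositions.
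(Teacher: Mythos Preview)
Your proposal is correct and follows exactly the paper's own justification: the paragraph preceding the lemma already records the isomorphism $S/(I+J)\cong S_1/I\otimes_K S_2/J$ from \cite[Proposition 2.2.20]{book}, then cites \cite[Proposition 2.2.21]{book} for the depth additivity and \cite[Theorem 3.1]{AR1} for the Stanley depth inequality. There is nothing to add.
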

		
\noindent In the following lemma, we combine two similar results for depth and Stanley depth. For depth the result is proved by Rauf {\cite[Corollary 1.3]{AR1}}, whereas for Stanley depth it is proved by Cimpoeas {\cite[Proposition 2.7]{MC}}.
		\begin{Lemma} \label{Cor7}
			Let $I\subset S$ be a monomial ideal and $u$ be a monomial in $S$ such that $u\notin I.$ Then
			$\depth (S/(I : u))\geq \depth(S/I)$ and $\sdepth (S/(I : u))\geq \sdepth(S/I).$
		\end{Lemma}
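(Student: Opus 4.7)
The plan is to prove both inequalities in parallel via the short exact sequence
\[
0 \to S/(I:u) \xrightarrow{\cdot u} S/I \to S/(I,u) \to 0,
\]
where the left map is multiplication by $u$. This map is well defined and injective because $fu \in I$ if and only if $f \in (I:u)$, and its cokernel is $S/(uS+I) = S/(I,u)$. The hypothesis $u \notin I$ guarantees that $S/(I:u)$ is nonzero. Writing $u = x_{i_1}^{a_1}\cdots x_{i_r}^{a_r}$ and using the identity $(I : v_1 v_2) = ((I : v_1) : v_2)$, I would first reduce by induction on $\deg u$ to the case $u = x_i$ for a single variable.

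For the depth part I would apply the variant of the Depth Lemma that bounds the leftmost term in a short exact sequence:
\[
\depth(S/(I:x_i)) \;\geq\; \min\{\depth(S/I),\, \depth(S/(I,x_i)) + 1\}.
\]
Combined with the standard fact that modding out any module $M$ by a single element $f \in \mathfrak{m}$ drops depth by at most one, i.e.\ $\depth(M/fM) \geq \depth(M) - 1$ (applied to $M = S/I$ and $f = x_i$, using $M/x_i M \cong S/(I,x_i)$), the right-hand side simplifies to $\depth(S/I)$, which gives the desired bound.

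For the Stanley depth part, Lemma \ref{rle2} only controls the middle term of a short exact sequence, so the same route does not work directly. Instead I would argue combinatorially: starting from a Stanley decomposition $\mathcal{D}: S/I = \bigoplus_j u_j K[Z_j]$ realizing $\sdepth(S/I)$, partition the monomials of $S \setminus (I:u)$ according to which component $u_j K[Z_j]$ contains their image after multiplication by $u$, and then refine each preimage into Stanley spaces on sets of size at least $|Z_j|$. Taking the disjoint union over $j$ produces a Stanley decomposition of $S/(I:u)$ with Stanley depth at least $\sdepth(S/I)$. The main obstacle is exactly this refinement step: one must verify that the preimage of a single Stanley space $u_j K[Z_j]$ under multiplication by $u$ decomposes into Stanley spaces of dimension at least $|Z_j|$, which is the combinatorial heart of Cimpoeas's argument.
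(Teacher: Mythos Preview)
The paper does not give its own proof of this lemma; it simply records the two citations (Rauf for depth, Cimpoea\c{s} for Stanley depth). So the comparison is between your sketch and the actual arguments in those sources.

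Your depth argument has a genuine gap. The short exact sequence and the Depth Lemma inequality
\[
\depth\bigl(S/(I:x_i)\bigr)\;\ge\;\min\bigl\{\depth(S/I),\ \depth\bigl(S/(I,x_i)\bigr)+1\bigr\}
\]
are fine, but the ``standard fact'' you invoke, namely $\depth(M/fM)\ge\depth(M)-1$ for an arbitrary element $f\in\mathfrak m$, is \emph{false} in general. For instance, over $R=k[[x,y,z]]$ let
$M=\Coker\begin{pmatrix} x & z\\ -y & 0\end{pmatrix}$.
The presentation map is injective, so $\pdim_R(M)=1$ and $\depth(M)=2$; but $M/yM\cong k\oplus k[[x,z]]$ has depth $0$. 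Worse, in the cyclic case $M=S/I$ the inequality $\depth(S/(I,x_i))\ge\depth(S/I)-1$ is, via the very same exact sequence and the Depth Lemma, \emph{equivalent} to $\depth(S/(I:x_i))\ge\depth(S/I)$, which is exactly what you are trying to prove. So the step is circular. Rauf's argument avoids this by exploiting the $\ZZ^n$-graded (monomial) structure rather than a general module-theoretic bound.

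For Stanley depth you correctly identify that Lemma~\ref{rle2} only controls the middle term, and you point toward the combinatorial refinement of a Stanley decomposition. That is indeed the shape of Cimpoea\c{s}'s proof, but as written your sketch defers the entire content (``the combinatorial heart'') to that reference; it is a citation rather than an independent argument.
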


		Now we recall some well know lemmas related to regularity. The form in which we state the first lemma is as stated in {\cite[Theorem 4.7]{regul}}. Part (a) and (c) of this lemma are  immediate consequences of {\cite[Corollary 20.19 and Proposition 20.20]{eisenbud}}, whereas part (b) is an immediate consequence of {\cite[	Lemma 2.10]{dao}}.

		\begin{Lemma}\label{regul}
			Let $ I $ be a monomial ideal and $ x $ be a variable of $S$. Then
			
			\begin{itemize}
				\item [(a)]  $ \reg (S/I) = \reg (S/(I:x))+1$, if $ \reg (S/(I:x)) > \reg (S/(I,x)),$
				\item[(b)] 	  $\reg (S/I)\in \{\reg (S/(I,x))+1,\reg (S/(I,x))\},$ if $ \reg (S/(I:x)) = \reg (S/(I,x)),$
				\item[(c)] 	  $\reg (S/I)= \reg (S/(I,x))$ if $ \reg (S/(I:x)) < \reg (S/(I,x)).$
			\end{itemize}
		\end{Lemma}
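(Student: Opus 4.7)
The natural starting point is the degree-shifted short exact sequence
$$0 \longrightarrow (S/(I:x))(-1) \stackrel{\cdot x}{\longrightarrow} S/I \longrightarrow S/(I,x) \longrightarrow 0,$$
coming from the canonical isomorphism $(I,x)/I \cong (S/(I:x))(-1)$ given by multiplication by $x$. Passing to the long exact sequence of $\Tor_{\bullet}(K,-)$ and reading off regularity from the graded Betti numbers produces the three classical inequalities recorded in \cite[Cor.~20.19, Prop.~20.20]{eisenbud}:
\begin{align*}
\reg(S/I) &\leq \max\{\reg(S/(I:x))+1,\; \reg(S/(I,x))\},\\
\reg(S/(I:x))+1 &\leq \max\{\reg(S/I),\; \reg(S/(I,x))+1\},\\
\reg(S/(I,x)) &\leq \max\{\reg(S/(I:x)),\; \reg(S/I)\},
\end{align*}
with the $+1$'s tracking the degree twist on the left-hand module.

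Parts (a) and (c) then follow by a squeezing argument. In (a), the hypothesis $\reg(S/(I:x))>\reg(S/(I,x))$ collapses the first maximum to $\reg(S/(I:x))+1$, so $\reg(S/I)\leq \reg(S/(I:x))+1$; in the second inequality the alternative $\reg(S/(I,x))+1$ is bounded by $\reg(S/(I:x))$, strictly smaller than $\reg(S/(I:x))+1$, so the maximum there must be realized by $\reg(S/I)$, giving the reverse inequality $\reg(S/I)\geq \reg(S/(I:x))+1$ and hence equality. Part (c) is completely symmetric: the first inequality collapses to $\reg(S/I)\leq \reg(S/(I,x))$, and the third inequality forces $\reg(S/I)\geq \reg(S/(I,x))$ because the alternative $\reg(S/(I:x))$ is strictly smaller.

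The delicate case is (b), $\reg(S/(I:x))=\reg(S/(I,x))=r$. The first inequality cleanly delivers the upper bound $\reg(S/I)\leq r+1$, so the upper endpoint of $\{r,r+1\}$ is immediate, but the three inequalities alone do not exclude the possibility that $\reg(S/I)<r$. Establishing the lower bound $\reg(S/I)\geq r$ is where the real work lies: one must return to the full $\Tor$ long exact sequence, select an extremal Betti witness $\beta_{i,j}(S/(I,x))\neq 0$ with $j-i=r$, and use the hypothetical vanishing $\Tor_i(K,S/I)_j=0$ (which $\reg(S/I)<r$ would force) to transport the nonvanishing, via the connecting map, into a corresponding Betti number of $S/(I:x)$; a careful iteration along the line $j-i=r$ then produces a contradiction. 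This is precisely the content of \cite[Lemma~2.10]{dao}, and I anticipate it to be the main technical hurdle of the proof.
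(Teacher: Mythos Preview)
Your proposal is correct and aligns precisely with the paper's treatment: the paper does not give an independent proof but simply records that parts (a) and (c) are immediate consequences of \cite[Corollary~20.19 and Proposition~20.20]{eisenbud} (i.e., the standard regularity inequalities for a short exact sequence, which you have written out and squeezed correctly), while part (b) is attributed to \cite[Lemma~2.10]{dao}. Your identification of the lower bound in (b) as the nontrivial step, and your deferral to the Dao--Huneke--Schweig lemma for it, matches the paper exactly.
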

The following result is proved by Kalai et al. \cite[Theorem 1.4]{kalai} for squarefree monomial ideals which was then generalized by Herzog in \cite[Corollary 3.2]{hergen} for arbitrary monomial ideals. 	

		\begin{Lemma}\label{circulenttineq}
If $I$ and $J$ are monomial ideals of $S$, then $ \reg({S/(I+J)}) \leq \reg(S/I)+\reg(S/J).$
	\end{Lemma}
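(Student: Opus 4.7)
My plan follows the strategy of Herzog's generalization \cite{hergen}: reduce from arbitrary monomial ideals to squarefree ones via polarization, and then invoke Kalai--Meshulam \cite{kalai} as the base case.

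I would polarize $I$ and $J$ simultaneously in a common extended polynomial ring $T$, introducing enough fresh variables $x_{i,1},x_{i,2},\dots$ to split each repeated factor $x_i^a \mapsto x_{i,1}\cdots x_{i,a}$. Polarization preserves all graded Betti numbers, hence $\reg$, and one verifies via the LCM description of intersection that it commutes with the formation of both sums and intersections of monomial ideals, so that $(I+J)^{\mathrm{pol}} = I^{\mathrm{pol}}+J^{\mathrm{pol}}$. Consequently
$$\reg(S/(I+J)) \,=\, \reg(T/(I^{\mathrm{pol}}+J^{\mathrm{pol}})),$$
with analogous equalities for $\reg(S/I)$ and $\reg(S/J)$. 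This reduces the problem to the case where $I$ and $J$ are squarefree.

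In the squarefree setting, write $I=I_\Delta$ and $J=I_\Gamma$ as Stanley--Reisner ideals of simplicial complexes on a common vertex set. The desired inequality
$$\reg(T/(I_\Delta+I_\Gamma)) \,\le\, \reg(T/I_\Delta) + \reg(T/I_\Gamma)$$
is exactly Kalai--Meshulam's theorem \cite[Theorem 1.4]{kalai}. Its proof proceeds via Hochster's formula together with Alexander duality, bounding the graded Betti numbers of $T/(I_\Delta+I_\Gamma)$ in terms of reduced simplicial homology of induced subcomplexes of $\Delta$ and $\Gamma$; equivalently, after passing to duals via Terai's identity $\reg(T/I_\Sigma) = \pdim(T/I_{\Sigma^\vee})-1$, the statement becomes a projective-dimension bound $\pdim(T/(I_{\Delta^\vee}\cap I_{\Gamma^\vee}))\le \pdim(T/I_{\Delta^\vee})+\pdim(T/I_{\Gamma^\vee})$ for the Alexander-dual ideals.

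The main obstacle is Kalai--Meshulam's theorem itself: it is a nontrivial topological/homological input and lies outside the scope of a short sketch. Granted that result, the only remaining work is verifying the compatibility of polarization with sums (together with the Betti-number preservation), which is a routine bookkeeping exercise.
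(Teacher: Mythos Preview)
Your proposal is correct and aligns precisely with what the paper does: the paper does not give its own proof but simply cites Kalai--Meshulam \cite{kalai} for the squarefree case and Herzog \cite{hergen} for the general monomial case, and your sketch (polarize to reduce to squarefree, then invoke Kalai--Meshulam) is exactly the content of those references. You have supplied more detail than the paper itself, which treats the lemma as a black-box citation.
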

\noindent Moreover, if $I_1$ and $I_2$ are two edge ideals minimally generated by disjoint sets of variables then Woodroofe proved the following lemma.
\begin{Lemma}[{\cite[Lemma 8]{woodroof}}]\label{circulentteq}
Let $S_1=K[x_1,\dots,x_r]$ and $S_2=K[x_{r+1},\dots,x_n]$ be rings of polynomials and $I_1$ and $I_2$ be edge ideals of $S_1$ and $S_2$, respectively. Then $$\reg({S/(I_{1}S+I_{2}S)})=\reg(S_{1}/I_{1})+\reg(S_{2}/I_{2}).$$
\end{Lemma}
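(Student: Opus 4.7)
The plan is to prove the equality in two steps, one for each inequality. For the upper bound
$$\reg(S/(I_1S+I_2S)) \le \reg(S_1/I_1)+\reg(S_2/I_2),$$
I would first apply Lemma~\ref{le3} iteratively (adjoining one variable at a time) to obtain $\reg(S/I_jS) = \reg(S_j/I_j)$ for $j=1,2$, and then invoke Lemma~\ref{circulenttineq} on the monomial ideals $I_1S$ and $I_2S$. This direction is essentially free given the results already stated.

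For the matching lower bound, I would exploit the isomorphism of $\ZZ^n$-graded $S$-modules
$$S/(I_1S+I_2S)\;\cong\;(S_1/I_1)\otimes_K (S_2/I_2),$$
which holds because the two ideals are generated by monomials in disjoint sets of variables. Let $\mathbb F_\bullet$ and $\mathbb G_\bullet$ denote the minimal graded free resolutions of $S_1/I_1$ over $S_1$ and of $S_2/I_2$ over $S_2$. Each term $F_i\otimes_K G_j$ is a free $S$-module of rank $\rank(F_i)\cdot\rank(G_j)$, and the K\"unneth formula over the field $K$ says that $\mathbb F_\bullet\otimes_K\mathbb G_\bullet$ has homology concentrated in homological degree zero, equal to $(S_1/I_1)\otimes_K(S_2/I_2)$. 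This tensor complex is moreover minimal, since the differentials of $\mathbb F_\bullet$ and $\mathbb G_\bullet$ have entries in the graded maximal ideals of $S_1$ and $S_2$, both of which are contained in $\mathfrak m_S$. Thus it is the minimal graded free resolution of $S/(I_1S+I_2S)$ over $S$, and one reads off the K\"unneth-type product formula
$$\beta^{S}_{i,j}\bigl(S/(I_1S+I_2S)\bigr) \;=\; \sum_{\substack{i_1+i_2=i\\ j_1+j_2=j}} \beta^{S_1}_{i_1,j_1}(S_1/I_1)\cdot \beta^{S_2}_{i_2,j_2}(S_2/I_2).$$
Picking indices $(i_1,j_1)$ and $(i_2,j_2)$ that realize the two regularities produces a nonzero Betti number of $S/(I_1S+I_2S)$ with shift $(j_1+j_2)-(i_1+i_2)=\reg(S_1/I_1)+\reg(S_2/I_2)$, finishing the lower bound.

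The main obstacle is the verification that $\mathbb F_\bullet\otimes_K\mathbb G_\bullet$ is in fact a minimal $S$-free resolution of the tensor product: exactness is the K\"unneth formula, which is painless because $K$ is a field (so the higher $\Tor^K$ terms vanish), while minimality reduces to a localization argument about the entries of the differentials. Once the Betti-number product formula is in hand, the regularity statement drops out from its additive structure across the two factors. I note that this argument never uses the assumption that $I_1,I_2$ are edge ideals, and would establish the same identity for arbitrary monomial ideals in disjoint variable sets.
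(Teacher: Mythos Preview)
Your argument is correct. The upper bound via Lemma~\ref{le3} and Lemma~\ref{circulenttineq} is immediate, and the lower bound via the tensor product of minimal free resolutions is the standard and sound route: over a field the K\"unneth formula gives exactness, minimality is preserved because the differentials land in $\mathfrak m_S$, and the resulting product formula for graded Betti numbers yields a nonzero $\beta_{i,j}$ with $j-i=\reg(S_1/I_1)+\reg(S_2/I_2)$. Your closing remark that the edge-ideal hypothesis is irrelevant is also right.

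There is nothing to compare against here: the paper does not supply its own proof of this lemma but simply quotes it from \cite[Lemma~8]{woodroof}. Woodroofe's proof in that reference is essentially the same tensor-product-of-resolutions computation you outline (he phrases it as tensoring minimal free resolutions of the two factors and reading off the shifts), so your proposal matches the original source as well.
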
	
		
 We discuss here a few terms that will be helpful throughout this paper. Let $k\geq 0$ and $L_{0},L_{1},\dots,L_{k}$, be subsets of $V(T_{n,k})$, and for $a\in \{0,1,\dots,k\}$, $L_a$ consists of all those vertices whose distance from the root vertex of $T_{n,k}$ is $a$. Thus $|L_0|=1$ and for $a\in \{1,2,\dots,k\}$, we have  $|L_a|=n(n-1)^{a-1}$. It is easy to see that $L_{0},L_{1},L_{2},\dots,L_{k}$ partition $V(T_{n,k})$, therefore, $|V(T_{n,k})|=\sum_{a=0}^{k}|L_a|= \frac{n(n-1)^{k}-2}{(n-2)}$. We label the root vertex of $T_{n,k}$ by $x_1^{(0)}$, that is, $L_0=\{x_1^{(0)}\}$ and for $a\geq 1$ the vertices of $L_a$ are labeled as $L_a=\{x_{i}^{(a)}:1\leq i \leq n(n-1)^{a-1} \}$, see Figure 2. Using this labeling $T_{n,0}=(\{x_1^{(0)}\},\emptyset)$ and for $k\geq 1$, 
		$V(T_{n,k})=\{x_1^{(0)}\}\underset{a=1}{\overset{k}{\cup}}\{x_{i}^{(a)}:1\leq i \leq n(n-1)^{a-1} \}$, 
		$E(T_{n,1})= \underset{l=1}{\overset{n}{\bigcup}}\{\{x_{1}^{(0)}, x_{l}^{(1)}\}\}$,
	and for $k\geq 2,$ we have
		\begin{equation*}
			E(T_{n,k})=\underset{l=1}{\overset{n}{\bigcup}}\{\{x_{1}^{(0)}, x_{l}^{(1)}\}\}\underset{a=1}{\overset{k-1}{\bigcup}}\,\,\,\, \underset{i=1}{\overset{n(n-1)^{a-1}}{\bigcup}}\,\,\underset{j=(n-1)i-(n-2)}{\overset{(n-1)i}{\bigcup}}\{\{x_{i}^{(a)},x_{j}^{(a+1)}\}\}.
		\end{equation*}
		
		Let $A:=\{x_1^{(0)}\}\underset{a=1}{\overset{k}{\cup}}\{x_{i}^{(a)}:1\leq i \leq (n-1)^{a} \}$ be a subset of $V(T_{n,k})$ and $H$ be an induced subgraph of $T_{n,k}$ on $A$. It is easy to see that $H=T'_{n,k}$. Let $L'_{0},L'_{1},L'_{2},\dots,L'_{k}$, be subsets of $V(T'_{n,k})$, such that for $a\in\{0,1,2,\dots,k\}$, $L'_a$ consists of all those vertices whose distance from the root vertex of $T'_{n,k}$ is $a$. Thus $|L'_0|=1$ and for $a\in \{1,2,\dots,k\}$ we have $|L'_a|=(n-1)^{a}$. Since $L'_a\subset L_a$, therefore, $L'_{0},L'_{1},L'_{2},\dots,L'_{k}$ partition $V(T'_{n,k})$ and  $|V(T'_{n,k})|=\sum_{a=0}^{k}|L'_a|= \frac{(n-1)^{k+1}-1}{(n-2)}$. Clearly, $x_1^{(0)}$ is the root vertex of $T'_{n,k}$, that is, $L'_0=\{x_1^{(0)}\}$ and for $a\geq 1$,  $L'_a=\{x_{i}^{(a)}:1\leq i \leq (n-1)^{a} \}$ as illustrated in Figure 2. Thus 	$V(T'_{n,k})=\underset{a=0}{\overset{k}{\cup}}\{x_{i}^{(a)}:1\leq i \leq (n-1)^{a} \}$, $E(T'_{n,0})=\emptyset$ and for $k\geq 1$, 
		
		\begin{equation*}
			E(T'_{n,k})=  \underset{a=0}{\overset{k-1}{\bigcup}}\,\,\,\, \underset{i=1}{\overset{(n-1)^{a}}{\bigcup}}\,\,\underset{j=(n-1)i-(n-2)}{\overset{(n-1)i}{\bigcup}}\{\{x_{i}^{(a)},x_{j}^{(a+1)}\}\}.
		\end{equation*} 
			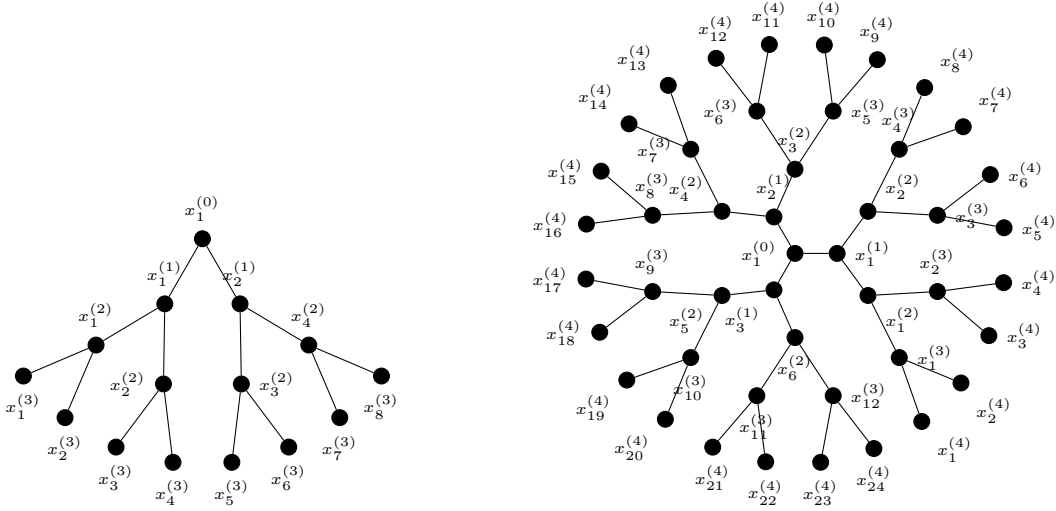
\begin{figure}[H]
			\centering
			\begin{subfigure}[b]{0.45\textwidth}
				\centering
				\[\begin{tikzpicture}[x=0.5cm, y=0.5cm]
					\vertex[fill] (1) at (0:0) [label=above:\tiny${x^{(0)}_{1}}$]{};
					\vertex[fill] (2) at (240:2) [label=above:\tiny${x^{(1)}_{1}}$]{};
					\vertex[fill] (3) at (300:2) [label=above:\tiny${x^{(1)}_{2}}$] {};
					\vertex[fill] (4) at (225:4) [label=above:\tiny${x^{(2)}_{1}}$]{};
					\vertex[fill] (5) at (255:4) [label=left:\tiny${x^{(2)}_{2}}$]{};
					\vertex[fill] (6) at (285:4) [label=right:\tiny${x^{(2)}_{3}}$]{};
					\vertex[fill] (7) at (315:4) [label=above:\tiny${x^{(2)}_{4}}$]{};
					\vertex[fill] (38) at (217.5:6) [label=below:\tiny${x^{(3)}_{1}}$] {};
					\vertex[fill] (39) at (232.5:6) [label=below:\tiny${x^{(3)}_{2}}$]{};
					\vertex[fill] (40) at (247.5:6) [label=below:\tiny${x^{(3)}_{3}}$]{};
					\vertex[fill] (41) at (262.5:6) [label=below:\tiny${x^{(3)}_{4}}$]{};
					\vertex[fill] (42) at (277.5:6) [label=below:\tiny${x^{(3)}_{5}}$]{};
					\vertex[fill] (43) at (292.5:6) [label=below:\tiny${x^{(3)}_{6}}$]{};
					\vertex[fill] (44) at (307.5:6) [label=below:\tiny${x^{(3)}_{7}}$]{};
					\vertex[fill] (45) at (322.5:6) [label=below:\tiny${x^{(3)}_{8}}$]{};
					
					\path 
					(1) edge (2)
					(1) edge (3)
					(2) edge (4)
					(2) edge (5)
					(3) edge (6)
					(3) edge (7)
					(38) edge (4)
					(40) edge (5)
					(42) edge (6)
					(44) edge (7)
					(39) edge (4)
					(41) edge (5)
					(43) edge (6)
					(45) edge (7);
				\end{tikzpicture}\]
				
			\end{subfigure}
			\hfill
			\begin{subfigure}[b]{0.5\textwidth}
				\centering
				\[\begin{tikzpicture}[x=0.28cm, y=0.28cm]
		\vertex[fill] (0) at (0:0) [label=left:\tiny${x^{(0)}_{1}}$]{};
		\vertex[fill] (1) at (0:2) [label=right:\tiny${x^{(1)}_{1}}$]{};
		\vertex[fill] (2) at (120:2) [label=above:\tiny${x^{(1)}_{2}}$]{};
		\vertex[fill] (3) at (240:2) [label=240:\tiny${x^{(1)}_{3}}$]{};
		\vertex[fill] (4) at (330:4) [label=345:\tiny${x^{(2)}_{1}}$]{};
		\vertex[fill] (5) at (30:4) [label=20:\tiny${x^{(2)}_{2}}$]{};
		\vertex[fill] (6) at (90:4) [label=above:\tiny${x^{(2)}_{3}}$]{};
		\vertex[fill] (7) at (150:4) [label=170:\tiny${x^{(2)}_{4}}$]{};
		\vertex[fill] (8) at (210:4) [label=200:\tiny${x^{(2)}_{5}}$]{};
		\vertex[fill] (9) at (270:4) [label=below:\tiny${x^{(2)}_{6}}$]{};
		\vertex[fill] (10) at (315:7) [label=right: \tiny${x^{(3)}_{1}}$]{};
		\vertex[fill] (11) at (345:7) [label=above:\tiny${x^{(3)}_{2}}$]{};	
		\vertex[fill] (12) at (15:7) [label=right :\tiny${x^{(3)}_{3}}$]{};
		\vertex[fill] (13) at (45:7) [label=above:\tiny${x^{(3)}_{4}}$]{};
		\vertex[fill] (14) at (75:7) [label=right:\tiny${x^{(3)}_{5}}$]{};
		\vertex[fill] (15) at (105:7) [label=left:\tiny${x^{(3)}_{6}}$]{};
		\vertex[fill] (16) at (135:7) [label=left:\tiny${x^{(3)}_{7}}$]{};
		\vertex[fill] (17) at (165:7) [label=above:\tiny${x^{(3)}_{8}}$]{};
		\vertex[fill] (18) at (195:7) [label=above:\tiny${x^{(3)}_{9}}$]{};
		\vertex[fill] (19) at (225:7) [label=below:\tiny${x^{(3)}_{10}}$]{};
		\vertex[fill] (20) at (255:7) [label=below:\tiny${x^{(3)}_{11}}$]{};
		\vertex[fill] (21) at (285:7) [label=right:\tiny${x^{(3)}_{12}}$]{};
		\vertex[fill] (10a) at (307:10) [label=315:\tiny${x^{(4)}_{1}}$]{};
		\vertex[fill] (11a) at (337:10) [label=right:\tiny${x^{(4)}_{3}}$]{};	
		\vertex[fill] (12a) at (7:10) [label=right:\tiny${x^{(4)}_{5}}$]{};
		\vertex[fill] (13a) at (37:10) [label=45:\tiny${x^{(4)}_{7}}$]{};
		\vertex[fill] (14a) at (67:10) [label=above:\tiny${x^{(4)}_{9}}$]{};
		\vertex[fill] (15a) at (97:10) [label=above:\tiny${x^{(4)}_{11}}$]{};
		\vertex[fill] (16a) at (127:10) [label=135:\tiny${x^{(4)}_{13}}$]{};
		\vertex[fill] (17a) at (157:10) [label=left:\tiny${x^{(4)}_{15}}$]{};
		\vertex[fill] (18a) at (187:10) [label=left:\tiny${x^{(4)}_{17}}$]{};
		\vertex[fill] (19a) at (217:10) [label=225:\tiny${x^{(4)}_{19}}$]{};
		\vertex[fill] (20a) at (247:10) [label=below:\tiny${x^{(4)}_{21}}$]{};
		\vertex[fill] (21a) at (277:10) [label=below:\tiny${x^{(4)}_{23}}$]{};
		\vertex[fill] (10b) at (322:10) [label=315:\tiny${x^{(4)}_{2}}$]{};
		\vertex[fill] (11b) at (352:10) [label=right:\tiny${x^{(4)}_{4}}$]{};	
		\vertex[fill] (12b) at (22:10) [label=right:\tiny${x^{(4)}_{6}}$]{};
		\vertex[fill] (13b) at (52:10) [label=45:\tiny${x^{(4)}_{8}}$]{};
		\vertex[fill] (14b) at (82:10) [label=above:\tiny${x^{(4)}_{10}}$]{};
		\vertex[fill] (15b) at (112:10) [label=above:\tiny${x^{(4)}_{12}}$]{};
		\vertex[fill] (16b) at (142:10) [label=135:\tiny${x^{(4)}_{14}}$]{};
		\vertex[fill] (17b) at (172:10) [label=left:\tiny${x^{(4)}_{16}}$]{};
		\vertex[fill] (18b) at (202:10) [label=left:\tiny${x^{(4)}_{18}}$]{};
		\vertex[fill] (19b) at (232:10) [label=225:\tiny${x^{(4)}_{20}}$]{};
		\vertex[fill] (20b) at (262:10) [label=below:\tiny${x^{(4)}_{22}}$]{};
		\vertex[fill] (21b) at (292:10) [label=below:\tiny${x^{(4)}_{24}}$]{};
		
		\path 
		(0) edge (1)
		(0) edge (2)
		(0) edge (3)
		(4) edge (1)
		(5) edge (1)
		(6) edge (2)
		(7) edge (2)
		(8) edge (3)
		(9) edge (3)	
		(4) edge (10)
		(4) edge (11)
		(5) edge (12)
		(5) edge (13)
		(6) edge (14)
		(6) edge (15)
		(7) edge (16)
		(7) edge (17)
		(8) edge (18)
		(8) edge (19)
		(9) edge (20)
		(9) edge (21)
		(10) edge (10a)
		(11) edge (11a)
		(12) edge (12a)
		(13) edge (13a)
		(14) edge (14a)
		(15) edge (15a)
		(16) edge (16a)
		(17) edge (17a)
		(18) edge (18a)
		(19) edge (19a)
		(20) edge (20a)
		(21) edge (21a)
		(10) edge (10b)
		(11) edge (11b)
		(12) edge (12b)
		(13) edge (13b)
		(14) edge (14b)
		(15) edge (15b)
		(16) edge (16b)
		(17) edge (17b)
		(18) edge (18b)
		(19) edge (19b)
		(20) edge (20b)
		(21) edge (21b)
		;
	\end{tikzpicture}\]
			\end{subfigure}
			\caption{From left to right perfect $2$-ary tree $T'_{3,3}$ and  perfect semiregular tree $T_{3,4}.$}
		\end{figure} 
		
 Let $\mathbb{M}_{n,k}:=K[V({T'_{n,k}})]/I(T'_{n,k})$, and  $\mathbb{M}^m_{n,k}$ be a $K$-algebra which is the tensor product of $m$ copies of $\mathbb{M}_{n,k}$ over $K$, that is, $\mathbb{M}^m_{n,k}:=\underset{j=1}{\overset{m}{\tensor_{K}}}\mathbb{M}_{n,k}.$ 
In the following remark we address some special cases of $\mathbb{M}_{n,k}^m$ that will be encountered in the proofs of our main theorems.
		\begin{Remark}
			{\em We define  $\mathbb{M}^0_{n,k}:= K$. If we define $I(T'_{n,0})=(0)$, then  $\mathbb{M}_{n,0}\cong K[x_1^{(0)}]$ and   $\mathbb{M}_{n,0}^m\cong \underset{j=1}{\overset{m}{\tensor_{K}}}K[x_1^{(0)}]$. Thus  $\depth(\mathbb{M}_{n,0}^m)=\sdepth(\mathbb{M}_{n,0}^m)=m$. }
		\end{Remark} 	\begin{Lemma}\label{nl}
			Let $n\geq 3$ and $k\geq 1$. Then $\depth(\mathbb{M}^0_{n,k})=\sdepth(\mathbb{M}^0_{n,k})=0$, and for $m\geq1$,   
			$$\depth(\mathbb{M}^m_{n,k})=m\cdot\depth(\mathbb{M}_{n,k}),$$ $$\sdepth(\mathbb{M}^m_{n,k})\geq m\cdot\sdepth(\mathbb{M}_{n,k})$$ and  
			$$\reg(\mathbb{M}^m_{n,k})=m\cdot\reg(\mathbb{M}_{n,k}).$$
		\end{Lemma}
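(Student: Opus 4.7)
The plan is to prove all three assertions simultaneously by induction on $m$. The case $m=0$ is immediate from the convention $\mathbb{M}^0_{n,k}=K$ made in the preceding Remark: a field viewed as a module over itself has both depth and Stanley depth equal to $0$. The base case $m=1$ is trivial since $\mathbb{M}^1_{n,k}=\mathbb{M}_{n,k}$, and all three equalities hold as identities.

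For the inductive step I fix $m\geq 2$ and identify
$$\mathbb{M}^m_{n,k}\ \cong\ S/(I_1S+I_2S),$$
where $S=S_1\tensor_K S_2$, the polynomial ring $S_1$ carries the variables of the first $m-1$ tensor factors and $I_1\subset S_1$ is the edge ideal of the disjoint union of $m-1$ copies of $T'_{n,k}$, while $S_2$ carries the variables of the last factor with $I_2=I(T'_{n,k})\subset S_2$. Under this identification $S_1/I_1\cong \mathbb{M}^{m-1}_{n,k}$ and $S_2/I_2\cong \mathbb{M}_{n,k}$, and the two factors are supported on disjoint sets of variables, which is precisely the setup of Lemma \ref{LEMMA1.5}.

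Applying Lemma \ref{LEMMA1.5} yields
$$\depth(\mathbb{M}^m_{n,k})=\depth(\mathbb{M}^{m-1}_{n,k})+\depth(\mathbb{M}_{n,k}),\qquad \sdepth(\mathbb{M}^m_{n,k})\geq \sdepth(\mathbb{M}^{m-1}_{n,k})+\sdepth(\mathbb{M}_{n,k}),$$
and the induction hypothesis applied to the $(m-1)$-fold factor immediately produces the claimed equality $m\cdot\depth(\mathbb{M}_{n,k})$ and lower bound $m\cdot\sdepth(\mathbb{M}_{n,k})$. For regularity, since $I_1$ is itself an edge ideal (that of the disjoint union of graphs) supported on variables disjoint from those of $I_2$, Lemma \ref{circulentteq} applies and gives $\reg(\mathbb{M}^m_{n,k})=\reg(\mathbb{M}^{m-1}_{n,k})+\reg(\mathbb{M}_{n,k})$, which combined with the inductive hypothesis delivers $m\cdot\reg(\mathbb{M}_{n,k})$.

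There is no real difficulty here: the three previously established facts (the definition $\mathbb{M}^0_{n,k}=K$, the tensor-product Lemma \ref{LEMMA1.5}, and Woodroofe's disjoint-union formula Lemma \ref{circulentteq}) do all the work. The only point that merits a moment's care is verifying, at each stage of the induction, that the accumulated ideal on the first $m-1$ copies is genuinely an edge ideal of a graph so that Lemma \ref{circulentteq} may be invoked; this is clear because the disjoint union of graphs has as its edge ideal the sum of their edge ideals on disjoint variable sets.
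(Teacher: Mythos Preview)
Your proposal is correct and follows essentially the same approach as the paper: the paper's proof is the single line ``The proof follows by using Lemma \ref{LEMMA1.5} and Lemma \ref{circulentteq},'' and your inductive argument is simply the explicit unpacking of that citation.
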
 	\begin{proof}
		    The proof follows by using Lemma \ref{LEMMA1.5} and Lemma \ref{circulentteq}.
		\end{proof} 
	 
Let $\mathbb{G}(I)$ denotes the minimal set of monomial generators of monomial ideal $I.$ For a monomial ideal $I$, $\supp{(I)}:=\{x_{i}:x_{i}|u\, \,  \text{for some} \,\,  u \in \mathbb{G}(I)\}.$	
	\begin{Remark}
{\em Let $I$ be a squarefree monomial ideal of $S$ minimally generated by monomials of degree at most $2.$ We associate a graph $G_{I}$ to the ideal $I$ with $V(G_I)=\supp(I)$  and $E(G_I)=\{\{x_{i},x_{j}\}:x_{i}x_{j}\in \mathbb{G}(I)\}.$ Let $x_{t} \in S$ be a variable of the polynomial ring $S$ such that $x_{t} \notin I.$ Then $(I:x_{t})$ and  $(I,x_{t})$ are the monomial ideals of $S$ such that $G_{(I:x_{t})}$ and $G_{(I,x_{t})}$ are subgraphs of $G_{I}.$ See Figure $3$ and $4$ for examples of 	$G_{(I:x_{1}^{(0)})}$,	$G_{(I,x_{1}^{(0)})}$ and $G_{(I: x^{(3)}_{1}x^{(3)}_{2}x^{(3)}_{3}x^{(3)}_{4}x^{(3)}_{5}x^{(3)}_{6}x^{(3)}_{7}x^{(3)}_{8}x^{(3)}_{9}x^{(3)}_{10}x^{(3)}_{11}x^{(3)}_{12})}.$ It is evident form the Figures 3 and 4 that we have the following isomorphisms:
	\begin{equation*}
	   	K[V(T'_{3,3})]/(I(T'_{3,3}):x^{(0)}_{1}) \cong \mathbb{M}^4_{3,1}  \tensor_{K} K[x^{(0)}_{1}],
	\end{equation*}
	$$	K[V(T'_{3,3})]/(I(T'_{3,3}),x^{(0)}_{1})\cong \mathbb{M}^2_{3,2} ,$$ and $$K[V(T_{3,4})]/\big(I(T_{3,4}):x^{(3)}_{1}x^{(3)}_{2}x^{(3)}_{3}x^{(3)}_{4}x^{(3)}_{5}x^{(3)}_{6}x^{(3)}_{7}x^{(3)}_{8}x^{(3)}_{9}x^{(3)}_{10}x^{(3)}_{11}x^{(3)}_{12}\big)\cong K[V(T_{3,1})]/I(T_{3,1}) \tensor_{K} K[L_{3}] .$$}
\end{Remark}

		\begin{figure}[H]
			\centering
			\begin{subfigure}[b]{0.45\textwidth}
				\centering
				\[\begin{tikzpicture}[x=0.5cm, y=0.5cm]
					\vertex[fill] (2) at (240:2) [label=above:\tiny${x^{(1)}_{1}}$]{};
					\vertex[fill] (3) at (300:2) [label=above:\tiny${x^{(1)}_{2}}$] {};
					\vertex[fill] (4) at (225:4) [label=above:\tiny${x^{(2)}_{1}}$]{};
					\vertex[fill] (5) at (255:4) [label=left:\tiny${x^{(2)}_{2}}$]{};
					\vertex[fill] (6) at (285:4) [label=right:\tiny${x^{(2)}_{3}}$]{};
					\vertex[fill] (7) at (315:4) [label=above:\tiny${x^{(2)}_{4}}$]{};
					\vertex[fill] (38) at (217.5:6) [label=below:\tiny${x^{(3)}_{1}}$] {};
					\vertex[fill] (39) at (232.5:6) [label=below:\tiny${x^{(3)}_{2}}$]{};
					\vertex[fill] (40) at (247.5:6) [label=below:\tiny${x^{(3)}_{3}}$]{};
					\vertex[fill] (41) at (262.5:6) [label=below:\tiny${x^{(3)}_{4}}$]{};
					\vertex[fill] (42) at (277.5:6) [label=below:\tiny${x^{(3)}_{5}}$]{};
					\vertex[fill] (43) at (292.5:6) [label=below:\tiny${x^{(3)}_{6}}$]{};
					\vertex[fill] (44) at (307.5:6) [label=below:\tiny${x^{(3)}_{7}}$]{};
					\vertex[fill] (45) at (322.5:6) [label=below:\tiny${x^{(3)}_{8}}$]{};
					
					\path 
					
					(38) edge (4)
					(40) edge (5)
					(42) edge (6)
					(44) edge (7)
					(39) edge (4)
					(41) edge (5)
					(43) edge (6)
					(45) edge (7);
				\end{tikzpicture}\]
				
			\end{subfigure}
			\hfill
			\begin{subfigure}[b]{0.45\textwidth}
				\centering
				\[\begin{tikzpicture}[x=0.5cm, y=0.5cm]
					\vertex[fill] (1) at (0:0) [label=above:\tiny${x^{(0)}_{1}}$]{};
					\vertex[fill] (2) at (240:2) [label=above:\tiny${x^{(1)}_{1}}$]{};
					\vertex[fill] (3) at (300:2) [label=above:\tiny${x^{(1)}_{2}}$] {};
					\vertex[fill] (4) at (225:4) [label=above:\tiny${x^{(2)}_{1}}$]{};
					\vertex[fill] (5) at (255:4) [label=left:\tiny${x^{(2)}_{2}}$]{};
					\vertex[fill] (6) at (285:4) [label=right:\tiny${x^{(2)}_{3}}$]{};
					\vertex[fill] (7) at (315:4) [label=above:\tiny${x^{(2)}_{4}}$]{};
					\vertex[fill] (38) at (217.5:6) [label=below:\tiny${x^{(3)}_{1}}$] {};
					\vertex[fill] (39) at (232.5:6) [label=below:\tiny${x^{(3)}_{2}}$]{};
					\vertex[fill] (40) at (247.5:6) [label=below:\tiny${x^{(3)}_{3}}$]{};
					\vertex[fill] (41) at (262.5:6) [label=below:\tiny${x^{(3)}_{4}}$]{};
					\vertex[fill] (42) at (277.5:6) [label=below:\tiny${x^{(3)}_{5}}$]{};
					\vertex[fill] (43) at (292.5:6) [label=below:\tiny${x^{(3)}_{6}}$]{};
					\vertex[fill] (44) at (307.5:6) [label=below:\tiny${x^{(3)}_{7}}$]{};
					\vertex[fill] (45) at (322.5:6) [label=below:\tiny${x^{(3)}_{8}}$]{};
					
					\path 
					
					(2) edge (4)
					(2) edge (5)
					(3) edge (6)
					(3) edge (7)
					(38) edge (4)
					(40) edge (5)
					(42) edge (6)
					(44) edge (7)
					(39) edge (4)
					(41) edge (5)
					(43) edge (6)
					(45) edge (7);
				\end{tikzpicture}\]
			\end{subfigure}
			\caption{From left to right $G_{(I(T'_{3,3}): x_{1}^{(0)})}$  and $G_{(I(T'_{3,3}),  x_{1}^{(0)})}$.}
		\end{figure}
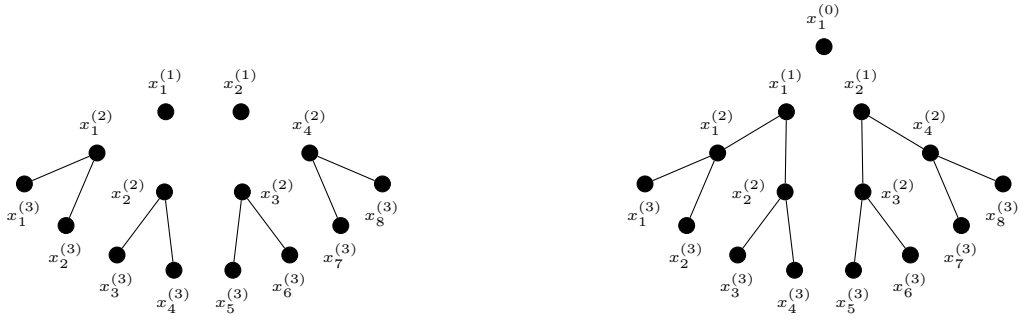 	\begin{figure}[H]
	\centering 
	\[\begin{tikzpicture}[x=0.28cm, y=0.28cm]
		\vertex[fill] (0) at (0:0) [label=left:\tiny${x^{(0)}_{1}}$]{};
		\vertex[fill] (1) at (0:2) [label=right:\tiny${x^{(1)}_{1}}$]{};
		\vertex[fill] (2) at (120:2) [label=above:\tiny${x^{(1)}_{2}}$]{};
		\vertex[fill] (3) at (240:2) [label=240:\tiny${x^{(1)}_{3}}$]{};
		\vertex[fill] (4) at (330:4) [label=345:\tiny${x^{(2)}_{1}}$]{};
		\vertex[fill] (5) at (30:4) [label=20:\tiny${x^{(2)}_{2}}$]{};
		\vertex[fill] (6) at (90:4) [label=above:\tiny${x^{(2)}_{3}}$]{};
		\vertex[fill] (7) at (150:4) [label=170:\tiny${x^{(2)}_{4}}$]{};
		\vertex[fill] (8) at (210:4) [label=200:\tiny${x^{(2)}_{5}}$]{};
		\vertex[fill] (9) at (270:4) [label=below:\tiny${x^{(2)}_{6}}$]{};
		\vertex[fill] (10a) at (307:10) [label=315:\tiny${x^{(4)}_{1}}$]{};
		\vertex[fill] (11a) at (337:10) [label=right:\tiny${x^{(4)}_{3}}$]{};	
		\vertex[fill] (12a) at (7:10) [label=right:\tiny${x^{(4)}_{5}}$]{};
		\vertex[fill] (13a) at (37:10) [label=45:\tiny${x^{(4)}_{7}}$]{};
		\vertex[fill] (14a) at (67:10) [label=above:\tiny${x^{(4)}_{9}}$]{};
		\vertex[fill] (15a) at (97:10) [label=above:\tiny${x^{(4)}_{11}}$]{};
		\vertex[fill] (16a) at (127:10) [label=135:\tiny${x^{(4)}_{13}}$]{};
		\vertex[fill] (17a) at (157:10) [label=left:\tiny${x^{(4)}_{15}}$]{};
		\vertex[fill] (18a) at (187:10) [label=left:\tiny${x^{(4)}_{17}}$]{};
		\vertex[fill] (19a) at (217:10) [label=225:\tiny${x^{(4)}_{19}}$]{};
		\vertex[fill] (20a) at (247:10) [label=below:\tiny${x^{(4)}_{21}}$]{};
		\vertex[fill] (21a) at (277:10) [label=below:\tiny${x^{(4)}_{23}}$]{};
		\vertex[fill] (10b) at (322:10) [label=315:\tiny${x^{(4)}_{2}}$]{};
		\vertex[fill] (11b) at (352:10) [label=right:\tiny${x^{(4)}_{4}}$]{};	
		\vertex[fill] (12b) at (22:10) [label=right:\tiny${x^{(4)}_{6}}$]{};
		\vertex[fill] (13b) at (52:10) [label=45:\tiny${x^{(4)}_{8}}$]{};
		\vertex[fill] (14b) at (82:10) [label=above:\tiny${x^{(4)}_{10}}$]{};
		\vertex[fill] (15b) at (112:10) [label=above:\tiny${x^{(4)}_{12}}$]{};
		\vertex[fill] (16b) at (142:10) [label=135:\tiny${x^{(4)}_{14}}$]{};
		\vertex[fill] (17b) at (172:10) [label=left:\tiny${x^{(4)}_{16}}$]{};
		\vertex[fill] (18b) at (202:10) [label=left:\tiny${x^{(4)}_{18}}$]{};
		\vertex[fill] (19b) at (232:10) [label=225:\tiny${x^{(4)}_{20}}$]{};
		\vertex[fill] (20b) at (262:10) [label=below:\tiny${x^{(4)}_{22}}$]{};
		\vertex[fill] (21b) at (292:10) [label=below:\tiny${x^{(4)}_{24}}$]{};
		
		\path 
		(0) edge (1)
		(0) edge (2)
		(0) edge (3)
	
		;
	\end{tikzpicture}\]
			\caption{$G_{(I(T_{3,4})\,:\, x^{(3)}_{1}x^{(3)}_{2}x^{(3)}_{3}x^{(3)}_{4}x^{(3)}_{5}x^{(3)}_{6}x^{(3)}_{7}x^{(3)}_{8}x^{(3)}_{9}x^{(3)}_{10}x^{(3)}_{11}x^{(3)}_{12})}$.}
	\label{fig:4RT}
		\end{figure} 	
		
		\section{Depth, Stanley Depth and Projective dimension}	
		In this section, we compute the depth, projective dimension and Stanley depth of the cyclic module $\mathbb{M}_{n,k}$ and using these results we obtain values for the said invariants of the cyclic module $K[V(T_{n,k})]/I(T_{n,k})$.
		\begin{Lemma}\label{nl1}
			Let $n\geq3$ and  $k\geq 2$. If $k=2$, then  $\depth(\mathbb{M}_{n,2}),\sdepth(\mathbb{M}_{n,2})\leq n-1$, and for $k\geq 3$, $$\depth(\mathbb{M}_{n,k})\leq(n-1)^{k-1}+\depth(\mathbb{M}_{n,k-3}), $$  and $$\sdepth(\mathbb{M}_{n,k})\leq(n-1)^{k-1}+\sdepth(\mathbb{M}_{n,k-3}).$$  
		\end{Lemma}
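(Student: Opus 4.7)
The plan is to bound $\depth(\mathbb{M}_{n,k})$ and $\sdepth(\mathbb{M}_{n,k})$ from above by passing to a suitable colon ideal. Lemma~\ref{Cor7} gives $\depth(S/I) \le \depth(S/(I:u))$ and $\sdepth(S/I) \le \sdepth(S/(I:u))$ for any monomial $u \notin I$, so the game is to pick $u$ so that $S/(I:u)$ splits cleanly as a polynomial ring tensored with a smaller $\mathbb{M}_{n,\bullet}$.

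The natural candidate is the product of all vertices at depth $k-1$, namely $u := \prod_{j=1}^{(n-1)^{k-1}} x_j^{(k-1)}$. Since every minimal generator of $I(T'_{n,k})$ is an edge joining two consecutive levels and $u$ involves only variables from $L'_{k-1}$, no generator divides $u$, so $u \notin I(T'_{n,k})$. The colon $(I(T'_{n,k}):u)$ can then be read off edge by edge: every edge $\{x_i^{(k-2)}, x_j^{(k-1)}\}$ contributes $x_i^{(k-2)}$; every edge $\{x_i^{(k-1)}, x_j^{(k)}\}$ contributes $x_j^{(k)}$; edges lying between levels $\le k-3$ are unaffected. Hence for $k \ge 3$,
$$\big(I(T'_{n,k}):u\big) \;=\; (L'_{k-2}) + (L'_k) + I(T'_{n,k-3}),$$
where $(L'_a)$ denotes the ideal generated by the variables in $L'_a$; for $k=2$ one gets $(L'_0) + (L'_2)$.

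Quotienting by this ideal kills the variables in $L'_{k-2} \cup L'_k$, leaves the variables in $L'_{k-1}$ free, and imposes on the remaining variables exactly the relations of $I(T'_{n,k-3})$. Consequently, for $k \ge 3$,
$$S/\big(I(T'_{n,k}):u\big) \;\cong\; K[L'_{k-1}] \otimes_K \mathbb{M}_{n,k-3},$$
while for $k=2$ it reduces to $K[L'_1]$. Since $|L'_{k-1}| = (n-1)^{k-1}$, iterating Lemma~\ref{le3} raises both $\depth$ and $\sdepth$ by exactly $(n-1)^{k-1}$; feeding this back into Lemma~\ref{Cor7} yields the two inequalities for $k \ge 3$. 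The $k=2$ case is identical, using $\depth(K[L'_1]) = \sdepth(K[L'_1]) = n-1$.

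The only delicate point is the colon computation, but nothing unexpected happens: because $T'_{n,k}$ is bipartite by level, every minimal generator of $I(T'_{n,k})$ sits inside a single pair of adjacent levels, so the colon acts on each generator independently and no unanticipated monomials enter $(I:u)$.
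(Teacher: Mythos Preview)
Your proof is correct and follows essentially the same approach as the paper: you choose the same monomial $u=\prod_{j=1}^{(n-1)^{k-1}} x_j^{(k-1)}$, invoke Lemma~\ref{Cor7}, and obtain the identical isomorphisms $S/(I(T'_{n,2}):u)\cong K[L'_1]$ and $S/(I(T'_{n,k}):u)\cong K[L'_{k-1}]\otimes_K \mathbb{M}_{n,k-3}$ for $k\ge 3$, then finish with Lemma~\ref{le3}. The only difference is cosmetic: you spell out the colon computation edge by edge, whereas the paper simply asserts the resulting isomorphisms.
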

		\begin{proof}
			We prove the result only for depth as the proof for Stanley depth is similar. Let $S=K[V(T'_{n,k})]$. Since  $u=:x_{1}^{(k-1)}x_{2}^{(k-1)}\cdots x^{(k-1)}_{(n-1)^{k-1}}\notin I(T'_{n,k})$, by Lemma  \ref{Cor7}, $\depth (\mathbb{M}_{n,k})\leq\depth (S/(I(T'_{n,k}):u)).$ We have the following $K$-algebra isomorphism, $S/(I(T'_{n,2}):u)\cong K[L'_1],$ and for $k\geq 3$,  $S/(I(T'_{n,k}):u)\cong K[L'_{k-1}]{\tensor_{K}} \mathbb{M}_{n,k-3}.$ Thus $\depth(S/(I(T'_{n,2}):u))=n-1$ and for $k\geq 3$, by Lemma \ref{le3}   $\depth(S/(I(T'_{n,k}):u))=(n-1)^{k-1}+\depth(\mathbb{M}_{n,k-3}).$ This completes the proof. 
		\end{proof}
		
		\begin{Proposition}\label{leminitial}
			Let $n\geq 3$ and $k\geq 1$. Then 
			
			\begin{equation*}
				\depth(\mathbb{M}_{n,k})=\sdepth(\mathbb{M}_{n,k})=\left\{\begin{matrix}
				\frac{(n-1)^{2}((n-1)^{k}-1)}{(n-1)^{3}-1}+1, & if \quad \quad k\equiv 0\, (\mod\, 3);\\ \\
					\frac{(n-1)^{k+2}-1}{(n-1)^{3}-1}, &if \quad \quad k\equiv 1 \, (\mod\, 3);
					\\	\\\frac{(n-1)^{k+2}-n+1}{(n-1)^{3}-1}, &if \quad \quad k\equiv 2\, (\mod\, 3).
				\end{matrix}\right.
			\end{equation*}
		\end{Proposition}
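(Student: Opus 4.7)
The plan is to prove the closed-form expressions for $\depth(\mathbb{M}_{n,k})$ and $\sdepth(\mathbb{M}_{n,k})$ simultaneously by strong induction on $k$. The upper bounds are already at hand from Lemma \ref{nl1} combined with Lemma \ref{Cor7}, using the monomial $u = \prod_{j=1}^{(n-1)^{k-1}} x_j^{(k-1)}$; a short arithmetic check shows that the recursion $\depth(\mathbb{M}_{n,k}) \leq (n-1)^{k-1} + \depth(\mathbb{M}_{n,k-3})$ together with the three base values reproduces the three cases of the stated formula. The remaining work is to establish matching lower bounds via Lemma \ref{rle2} applied to the short exact sequence
\[
0 \to S/(I:x) \xrightarrow{\cdot x} \mathbb{M}_{n,k} \to S/(I, x) \to 0
\]
(with $I = I(T'_{n,k})$) for a carefully chosen variable $x$.

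For the base cases, $k = 0$ is immediate since $\mathbb{M}_{n,0} \cong K[x_1^{(0)}]$, and $k = 1$ reduces to Lemma \ref{leAli} via $T'_{n,1} \cong \mathbb{S}_n$. For $k = 2$, I take $x = x_1^{(0)}$: coloning by the root kills $L'_1$ and leaves the $(n-1)^2$ level-$2$ vertices isolated, so $S/(I:x_1^{(0)}) \cong K[x_1^{(0)}, L'_2]$ has depth $1 + (n-1)^2$, while $S/(I, x_1^{(0)}) \cong \mathbb{M}_{n,1}^{n-1}$ has depth $n-1$ by Lemmas \ref{nl} and \ref{leAli}. Lemma \ref{rle2} then yields $\depth(\mathbb{M}_{n,2}) \geq n - 1$, matching the upper bound.

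For the inductive step $k \geq 3$, I first attempt the same root SES. The decomposition of $T'_{n,k} \setminus \{x_1^{(0)}\}$ into $n - 1$ disjoint copies of $T'_{n,k-1}$, and of the level-$\geq 2$ subgraph (after killing $L'_1$) into $(n-1)^2$ copies of $T'_{n,k-2}$, yield the isomorphisms $S/(I, x_1^{(0)}) \cong \mathbb{M}_{n,k-1}^{n-1}$ and $S/(I:x_1^{(0)}) \cong K[x_1^{(0)}] \tensor_K \mathbb{M}_{n,k-2}^{(n-1)^2}$. Applying Lemmas \ref{nl}, \ref{le3}, and the inductive hypothesis, I compare the two depths $(n-1)\depth(\mathbb{M}_{n,k-1})$ and $1 + (n-1)^2 \depth(\mathbb{M}_{n,k-2})$ with the target $(n-1)^{k-1} + \depth(\mathbb{M}_{n,k-3})$; substituting the recurrence $\depth(\mathbb{M}_{n,j}) = (n-1)^{j-1} + \depth(\mathbb{M}_{n,j-3})$ at $j = k-1$ and $j = k-2$ shows that both values meet or exceed the target when $k \equiv 1$ or $2 \pmod 3$, so Lemma \ref{rle2} closes these two residue classes.

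The main obstacle is the case $k \equiv 0 \pmod 3$, where the root SES falls short of the target by exactly $1$. To repair this, I switch to the SES at the internal vertex $x = x_1^{(1)}$: coloning kills both $x_1^{(0)}$ and the $n-1$ children of $x_1^{(1)}$, giving $S/(I:x_1^{(1)}) \cong K[x_1^{(1)}] \tensor_K \mathbb{M}_{n,k-1}^{n-2} \tensor_K \mathbb{M}_{n,k-3}^{(n-1)^2}$, while $S/(I, x_1^{(1)})$ breaks into $n - 1$ copies of $T'_{n,k-2}$ (rooted at the orphaned children of $x_1^{(1)}$) together with a truncated subtree $T^*$ at $x_1^{(0)}$ whose root has degree $n - 2$. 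A further SES applied to $T^*$ at the vertex adjacent to $x_1^{(0)}$ inside one of its $T'_{n,k-1}$-subtrees isolates enough additional orphaned $T'_{n,k-3}$-subtrees to supply, via the inductive hypothesis, the extra $+1$ needed to hit the target. Throughout, the Stanley depth argument runs in exact parallel, since Lemma \ref{rle2}, Lemma \ref{nl}, and Lemma \ref{LEMMA1.5} each supply Stanley depth inequalities in the same direction as their depth counterparts.
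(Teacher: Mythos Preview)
Your overall strategy coincides with the paper's: upper bounds from Lemma \ref{nl1}, and lower bounds via Lemma \ref{rle2} using the root short exact sequence for $k\equiv 1,2\pmod 3$ and a level-$1$ short exact sequence for $k\equiv 0\pmod 3$. The base cases and the residue classes $k\equiv 1,2$ are handled correctly and match the paper's argument.

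The gap is in the case $k\equiv 0\pmod 3$. After the short exact sequence at $x_1^{(1)}$, the comma piece factors as $S/(I,x_1^{(1)})\cong \mathbb{M}_{n,k-2}^{\,n-1}\tensor_K K[V(T^*)]/I(T^*)$, where $T^*$ is the truncated tree whose root has degree $n-2$. You assert that ``a further SES'' on $T^*$ supplies the missing $+1$, but this only terminates when $n=3$: for $n\geq 4$, one more short exact sequence at a level-$1$ vertex of $T^*$ leaves a new nonstandard tree $T^{**}$ with root degree $n-3$, and you must keep going. The paper handles this by running the full family of $n-1$ short exact sequences
\[
0 \longrightarrow S/\big((I(T'_{n,k}),A_{i-1}):x_i^{(1)}\big) \longrightarrow S/(I(T'_{n,k}),A_{i-1}) \longrightarrow S/(I(T'_{n,k}),A_i) \longrightarrow 0,
\]
with $A_i=(x_1^{(1)},\dots,x_i^{(1)})$ for $i=1,\dots,n-1$. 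Only at the final step does the root become isolated, giving $S/(I(T'_{n,k}),A_{n-1})\cong K[x_1^{(0)}]\tensor_K\mathbb{M}_{n,k-2}^{(n-1)^2}$, whose depth $1+(n-1)^2\depth(\mathbb{M}_{n,k-2})$ is exactly the target. Each intermediate colon piece must also be checked (the paper does so and finds the value independent of $i$). Your sketch is on the right track, but the iteration and its arithmetic need to be made explicit for general $n$.
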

		\begin{proof}
			First, we will prove the result for depth. If $k=1$, then $\mathbb{M}_{n,1}\cong K[V(\mathbb{S}_n)]/I(\mathbb{S}_n$), by Lemma \ref{leAli} we have $\depth(\mathbb{M}_{n,1})=1$, as required. Let $k\geq2$ and $S=K[V(T'_{n,k})].$ We have the following short exact sequence
			\begin{equation*} 
				0	\longrightarrow\ S/(I(T'_{n,k}):x^{(0)}_{1}) \xrightarrow{\,\,.x^{(0)}_{1}\,}  \mathbb{M}_{n,k} 
				\longrightarrow\ S/(I(T'_{n,k}),x^{(0)}_{1})  \longrightarrow\ 0,
			\end{equation*}	
			It is easy to see that				$S/(I(T'_{n,k}):x^{(0)}_{1}) \cong \mathbb{M}^{(n-1)^2}_{n,k-2}\tensor_{K} K[x^{(0)}_{1}]$	and  $S/(I(T'_{n,k}),x^{(0)}_{1})\cong\mathbb{M}^{n-1}_{n,k-1}$. By Lemma \ref{LEMMA1.5} and Lemma \ref{nl}, we have
			\begin{equation*}		
				\depth (S/(I(T'_{n,k}):x^{(0)}_{1})) = (n-1)^2\depth (\mathbb{M}_{n,k-2})+1,	
			\end{equation*}
			\begin{equation*}
				\depth (S/(I(T'_{n,k}),x^{(0)}_{1})) = (n-1)\depth(\mathbb{M}_{n,k-1}).		\end{equation*}	
			Thus by Lemma \ref{rle2}, we have 
			\begin{equation}\label{eq1}
				\depth(\mathbb{M}_{n,k})\geq \min\{(n-1)^2\depth (\mathbb{M}_{n,k-2})+1,\,(n-1)\depth(\mathbb{M}_{n,k-1})\}.  
			\end{equation}
			If $k=2$, then by Eq. \ref{eq1}  $$\depth(\mathbb{M}_{n,2})\geq \min\{(n-1)^2\depth (\mathbb{M}_{n,0})+1,\,(n-1)\depth(\mathbb{M}_{n,1})\}=\min\{(n-1)^2+1,(n-1)\}=n-1,$$ and by Lemma \ref{nl1}, $\depth(\mathbb{M}_{n,2})\leq n-1$. Thus $\depth(\mathbb{M}_{n,2})= n-1$. This prove the result for $k=2$.\\
			Let $k\geq 3$. For $1\leq i\leq n-1$, let $A_i:=(x^{(1)}_{1}	,x^{(1)}_{2}	,\dots,x^{(1)}_{i})$ and $A_0:=(0)$ be prime ideals. Consider the family of short exact sequences:
			$$	0	\longrightarrow\ S/((I(T'_{n,k}),A_{i-1})	:x^{(1)}_{i}	) \xrightarrow{\,\,.x^{(1)}_{i}\,}  S/(I(T'_{n,k}),A_{i-1}	) 
			\longrightarrow\ S/(I(T'_{n,k}),A_{i}	)  \longrightarrow\ 0,$$
			applying Lemma \ref{rle2} on this family of short exact sequences we get the following inequality			\begin{multline}\label{dn-ii}
				\depth (\mathbb{M}_{n,k})=\depth(S/(I(T'_{n,k}),A_{0})) \geq\\ \min\big\{\min\{\depth(S/((I(T'_{n,k}),A_{i-1}):x^{(1)}_{i})):i=1,2,\dots,n-1\},\depth(S/(I(T'_{n,k}),A_{n-1}))\big\}.
			\end{multline}
			We have the following isomorphism of $K$-algebras:
			$$	S/((I(T'_{n,k}),A_{i-1})	:x^{(1)}_{i})\cong \mathbb{M}_{n,k-2}^{(i-1)(n-1)}{\tensor_{K}}\mathbb{M}_{n,k-1}^{n-1-i}{\tensor_{K}} \mathbb{M}^{(n-1)^2}_{n,k-3}{\tensor_{K}}K[x^{(1)}_{i}],$$
			$$S/(I(T'_{n,k}),A_{n-1}) \cong \mathbb{M}_{n,k-2}^{(n-1)^2}\tensor_{K}K[x^{(0)}_{1}].$$ By applying Lemma \ref{LEMMA1.5}
			\begin{multline*}	\depth(S/((I(T'_{n,k}),A_{i-1}):x^{(1)}_{i}))=\\= \depth(\mathbb{M}_{n,k-2}^{(i-1)(n-1)})+\depth(\mathbb{M}_{n,k-1}^{n-1-i})+\depth(\mathbb{M}^{(n-1)^2}_{n,k-3})+\depth(K[x^{(1)}_{i}])
			\end{multline*} and 
			$$\depth(S/(I(T'_{n,k}),A_{n-1}))= \depth\big(\mathbb{M}^{(n-1)^2}_{n,k-2}\big)+\depth(K[x^{(0)}_{1}]).$$
			By Lemma \ref{nl} we have
			\begin{multline*}\label{eq3}
				\depth	(S/((I(T'_{n,k}),A_{i-1}):x^{(1)}_{i})) = (i-1)(n-1)\depth (\mathbb{M}_{n,k-2})\\+ (n-1-i)\depth(\mathbb{M}_{n,k-1})+(n-1)^2\depth(\mathbb{M}_{n,k-3})+1	\end{multline*}	
			and 
			\begin{equation*}
				\depth	(S/(I(T'_{n,k}),A_{n-1})) =(n-1)^{2}\depth(\mathbb{M}_{n,k-2})+1.
			\end{equation*}
			Now by Eq. \ref{dn-ii} we get
			\begin{multline}\label{eq2}
				\depth (\mathbb{M}_{n,k})\geq \min\Big\{\min_{i=1}^{n-1}\Big\{(i-1)(n-1)\depth (\mathbb{M}_{n,k-2})+ (n-1-i)\depth(\mathbb{M}_{n,k-1})\\+(n-1)^2\depth(\mathbb{M}_{n,k-3})+1\Big\},(n-1)^{2}\depth(\mathbb{M}_{n,k-2})+1\Big\}. 
			\end{multline}
			If $k=3$, then
			\begin{multline}\label{eqs}
				\depth (\mathbb{M}_{n,3})\geq\\ \min\Big\{\min_{i=1}^{n-1}\big\{(i-1)(n-1)\depth (\mathbb{M}_{n,1})+ (n-1-i)\depth(\mathbb{M}_{n,2})+(n-1)^2\depth(\mathbb{M}_{n,0})+1\big\},\\(n-1)^{2}\depth(\mathbb{M}_{n,1})+1\Big\}=\min\Big\{\min_{i=1}^{n-1}\big\{(i-1)(n-1)+ (n-1-i)(n-1)+(n-1)^2+1\big\},(n-1)^{2}+1\Big\}. 
			\end{multline}
			Since $$(i-1)(n-1)+ (n-1-i)(n-1)+(n-1)^2+1=(n-1)(2n-3)+1=(n-1)^2+1+\big((n-1)^2-(n-1)\big),$$ for all $i\in\{1,2,\dots,n-1\}$. Thus by Eq. \ref{eqs} $\depth(\mathbb{M}_{n,k})\geq (n-1)^2+1$. By Lemma \ref{nl1} we have $\depth(\mathbb{M}_{n,k})\leq (n-1)^2+1$, we get $\depth(\mathbb{M}_{n,k})= (n-1)^2+1$, as required.  Now let $k\geq 4$, we will prove the required result by induction on $k$. We consider three cases:
			\begin{description}
				
				\item[Case 1] Let $k\equiv 1(\mod 3)$. Recall Eq. \ref{eq1} \begin{equation*}
					\depth(\mathbb{M}_{n,k})\geq \min\{(n-1)^2\depth (\mathbb{M}_{n,k-2})+1,\,(n-1)\depth(\mathbb{M}_{n,k-1})\}.  
				\end{equation*}   
				Since $k\equiv 1(\mod 3)$ so $k-1\equiv 0\, (\mod \, 3)$ and $k-2\equiv 2\, (\mod \, 3)$ thus by induction on $k$, we get
				\begin{multline*}\label{coln22s2n}
					(n-1)^2\depth (\mathbb{M}_{n,k-2})+1= (n-1)^{2}\bigg( \frac{(n-1)^{(k-2)+2}-n+1}{(n-1)^{3}-1} \bigg)+1 = \frac{(n-1)^{k+2}-1}{(n-1)^{3}-1},
				\end{multline*}
				and 				
				\begin{multline*}
					(n-1)\depth(\mathbb{M}_{n,k-1})  = (n-1)\bigg( 	\frac{(n-1)^{2}((n-1)^{k-1}-1)}{(n-1)^{3}-1}+1\bigg)\\=\frac{(n-1)^{k+2}+n^{4}-5n^{3}+9n^{2}-8n+3}{(n-1)^{3}-1} = \frac{(n-1)^{k+2}-1}{(n-1)^{3}-1}+ \frac{n^{4}-5n^{3}+9n^{2}-8n+4}{(n-1)^{3}-1},
				\end{multline*}
				Note that $\frac{n^{4}-5n^{3}+9n^{2}-8n+4}{(n-1)^{3}-1}>0$ for all $n\geq3$ by using MATLAB\,\textsuperscript{\tiny\textregistered}. Thus $\depth(\mathbb{M}_{n,k})\geq \frac{(n-1)^{k+2}-1}{(n-1)^{3}-1}.$
				By Lemma \ref{nl1}, 
				$\depth(\mathbb{M}_{n,k})\leq(n-1)^{k-1}+\depth(\mathbb{M}_{n,k-3})$, since
				$k-3\equiv 1(\mod 3)$, thus by induction on $k$, we have 
				$\depth(\mathbb{M}_{n,k})\leq \frac{(n-1)^{k-1}-1}{(n-1)^{3}-1}+(n-1)^{k-1} = \frac{(n-1)^{k+2}-1}{(n-1)^{3}-1}.$
				Hence $ \depth (\mathbb{M}_{n,k})=\frac{(n-1)^{k+2}-1}{(n-1)^{3}-1},$ as required.		    
				
				\item[Case 2] Let $k\equiv 2(\mod 3)$. In this case $k-1\equiv 1\, (\mod \, 3)$ and $k-2\equiv 0\, (\mod \, 3)$ thus by induction on $k$, we have
				\begin{multline*}
					(n-1)^2\depth (\mathbb{M}_{n,k-2})+1= (n-1)^{2}\bigg( \frac{(n-1)^{2}((n-1)^{k-2}-1)}{(n-1)^{3}-1}+1\bigg)+1\\= \frac{(n-1)^{k+2}+n^{5}-6n^{4}+15n^{3}-20n^{2}+14n-5}{(n-1)^{3}-1}
					= \frac{(n-1)^{k+2}-n+1}{(n-1)^{3}-1}\\+ \frac{n^{5}-6n^{4}+15n^{3}-20n^{2}+15n-6}{(n-1)^{3}-1},
				\end{multline*}
				and 				
				\begin{equation*}
					(n-1)\depth(\mathbb{M}_{n,k-1})  = (n-1)\bigg(  \frac{(n-1)^{(k-1)+2}-1}{(n-1)^{3}-1}\bigg)=\frac{(n-1)^{k+2}-n+1}{(n-1)^{3}-1}.
				\end{equation*}
				Note that $\frac{n^{5}-6n^{4}+15n^{3}-20n^{2}+15n-6}{(n-1)^{3}-1}>0$ for all $n\geq 3$ by using MATLAB\,\textsuperscript{\tiny\textregistered}. Thus by using Eq. \ref{eq1}, we have
				$\depth (\mathbb{M}_{n,k})\geq \frac{(n-1)^{k+2}-n+1}{(n-1)^{3}-1}.$ For the other inequality we use again Lemma \ref{nl1}    $\depth(\mathbb{M}_{n,k}) \leq(n-1)^{k-1}+\depth(\mathbb{M}_{n,k-3})$. Since $k-3\equiv 2(\mod 3)$, so by induction on $k$,  $\depth(\mathbb{M}_{n,k})\leq (n-1)^{k-1}+ \frac{(n-1)^{(k-3)+2}-n+1}{(n-1)^{3}-1}= \frac{(n-1)^{k+2}-n+1}{(n-1)^{3}-1}.$  
				Hence $	\depth (\mathbb{M}_{n,k})=\frac{(n-1)^{k+2}-n+1}{(n-1)^{3}-1}.$
				\item [Case 3] Let $k\equiv 0(\mod 3)$. Recall Eq. \ref{eq2} 
				\begin{multline*}
					\depth (\mathbb{M}_{n,k})\geq \min\Big\{\min_{i=1}^{n-1}\Big\{(i-1)(n-1)\depth (\mathbb{M}_{n,k-2})+ (n-1-i)\depth(\mathbb{M}_{n,k-1})\\+(n-1)^2\depth(\mathbb{M}_{n,k-3})+1\Big\},(n-1)^{2}\depth(\mathbb{M}_{n,k-2})+1\Big\}. 
				\end{multline*}
				Since $ k-3\equiv 0\, (\mod \, 3)$, $ k-2\equiv 1\, (\mod \, 3)$ and  $ k-1\equiv 2\, (\mod \, 3)$ and $i\in\{1,2,\dots,n-1\}$. by induction on $k$, we have 
				\begin{multline*}
					(i-1)(n-1)\depth (\mathbb{M}_{n,k-2})+ (n-1-i)\depth(\mathbb{M}_{n,k-1})+(n-1)^2\depth(\mathbb{M}_{n,k-3})+1\\=(i-1)(n-1)\bigg( \frac{(n-1)^{(k-2)+2}-1}{(n-1)^{3}-1}\bigg) +(n-1-i)\bigg( \frac{(n-1)^{(k-1)+2}-n+1}{(n-1)^{3}-1}\bigg)\\+(n-1)^{2}\bigg( \frac{(n-1)^{2}((n-1)^{k-3}-1)}{(n-1)^{3}-1}+1\bigg)+1=\frac{(n-1)^{k+2}+n^{5}-6n^{4}+15n^{3}-21n^{2}+17n-7}{(n-1)^{3}-1} \\=\frac{(n-1)^{2}((n-1)^{k}-1)}{(n-1)^{3}-1}+1+ \frac{n^{5}-6n^{4}+14n^{3}-17n^{2}+12n-4}{(n-1)^{3}-1}
				\end{multline*}			
				and 
				\begin{multline*}
					(n-1)^{2}\depth(\mathbb{M}_{n,k-2})+1=
					(n-1)^2\bigg( \frac{(n-1)^{(k-2)+2}-1}{(n-1)^{3}-1}\bigg) +1=\frac{(n-1)^{2}((n-1)^{k}-1)}{(n-1)^{3}-1}+1.
				\end{multline*}
				Note that $\frac{n^{5}-6n^{4}+14n^{3}-17n^{2}+12n-4}{(n-1)^{3}-1}>0,$ for all $n\geq3$ by using MATLAB\,\textsuperscript{\tiny\textregistered}. Thus by Eq. \ref{eq2} we have $\depth(\mathbb{M}_{n,k})\geq \frac{(n-1)^{2}((n-1)^{k}-1)}{(n-1)^{3}-1}+1.$ Again by Lemma \ref{nl1} we have $\depth(\mathbb{M}_{n,k})\leq (n-1)^{k-1}+\mathbb{M}_{n,k-3}=(n-1)^{k-1}+\frac{(n-1)^{2}((n-1)^{k-3}-1)}{(n-1)^{3}-1}+1= \frac{(n-1)^{2}((n-1)^{k}-1)}{(n-1)^{3}-1}+1.$
			\end{description}
			This completes the proof.
			Proof for Stanley depth is similar.
		\end{proof}
		\begin{Corollary} \label{corsub}
			Let $n\geq 3$  and  $k\geq 1$. Then
			\begin{equation*} \label{pdimm}
				\pdim(\mathbb{M}_{n,k})=\left\{\begin{matrix}
					\frac{(n-1)^{k+1}-1}{n-2}-	\frac{(n-1)^{2}((n-1)^{k}-1)}{(n-1)^{3}-1}-1, & if \quad \quad k\equiv 0\, (\mod\, 3);\\\\
					\frac{(n-1)^{k+1}-1}{n-2}-\frac{(n-1)^{k+2}-1}{(n-1)^{3}-1}, & if \quad \quad k\equiv 1 \,(\mod\, 3);
					\\	\\\frac{(n-1)^{k+1}-1}{n-2}-\frac{(n-1)^{k+2}-n+1}{(n-1)^{3}-1}, &if \quad \quad k\equiv 2\, (\mod\, 3).
				\end{matrix}\right.
			\end{equation*}
		\end{Corollary}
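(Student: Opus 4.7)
The plan is to invoke the Auslander--Buchsbaum formula applied to the cyclic module $\mathbb{M}_{n,k} = K[V(T'_{n,k})]/I(T'_{n,k})$ over the polynomial ring $S = K[V(T'_{n,k})]$. Since $\mathbb{M}_{n,k}$ is a finitely generated graded module over the (graded) local ring $S$, we have
\[
\pdim(\mathbb{M}_{n,k}) = \depth(S) - \depth(\mathbb{M}_{n,k}).
\]
Here $\depth(S)$ equals the number of variables of $S$, namely $|V(T'_{n,k})|$, which was computed in the preliminaries to be $\frac{(n-1)^{k+1}-1}{n-2}$.

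The proof then reduces to a direct substitution. First I would record the identity $\pdim(\mathbb{M}_{n,k}) = \frac{(n-1)^{k+1}-1}{n-2} - \depth(\mathbb{M}_{n,k})$ and then split into the three cases $k \equiv 0, 1, 2 \pmod 3$, inserting the closed-form value of $\depth(\mathbb{M}_{n,k})$ supplied by Proposition \ref{leminitial} in each case. In each branch the arithmetic is immediate: for instance, when $k\equiv 0 \pmod 3$ one obtains $\pdim(\mathbb{M}_{n,k}) = \frac{(n-1)^{k+1}-1}{n-2} - \frac{(n-1)^{2}((n-1)^{k}-1)}{(n-1)^{3}-1} - 1$, matching the first branch of the stated formula, and similarly for the other two residue classes.

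There is essentially no obstacle: the only nontrivial ingredient, namely the depth of $\mathbb{M}_{n,k}$, has already been established in Proposition \ref{leminitial}. One minor thing to verify is that Auslander--Buchsbaum applies, which holds because $S$ is a polynomial ring (hence Cohen--Macaulay, indeed regular) and $\mathbb{M}_{n,k}$ is a finitely generated $\mathbb{Z}^n$-graded $S$-module of finite projective dimension. Thus the corollary follows by a one-line appeal to Auslander--Buchsbaum together with a case-by-case substitution of the depth formula.
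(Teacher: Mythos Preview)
Your proposal is correct and is exactly the approach taken in the paper: the authors' proof consists of a single sentence invoking the Auslander--Buchsbaum formula together with Proposition~\ref{leminitial}. The only input beyond that proposition is $\depth(S)=|V(T'_{n,k})|=\frac{(n-1)^{k+1}-1}{n-2}$, which you have identified.
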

		\begin{proof}
			The result follows by using Auslander–Buchsbaum formula {\cite[Theorems 1.3.3]{depth}} and Proposition \ref{leminitial}.
		\end{proof}
		\begin{Remark}
			{\em Let $n\geq 3$, we define $I(T_{n,0})=(0)$, thus $K[V(T_{n,0})]/I(T_{n,0})\cong K[x_1^{(0)}]$. We have  $\depth(K[V(T_{n,0})]/I(T_{n,0}))=  \sdepth(K[V(T_{n,0})]/I(T_{n,0}))=1$. }
		\end{Remark}
		\begin{Lemma}\label{nl1main}
			Let $n\geq3$, $k\geq 2$ and $R=K[V(T_{n,k})].$ If $k=2$, then  $$\depth(R/I(T_{n,2})),\sdepth(R/I(T_{n,2}))\leq n.$$ If $k\geq 3$, then $$\depth(R/I(T_{n,k}))\leq n(n-1)^{k-2}+\depth(K[V(T_{n,k-3})]/I(T_{n,k-3})), $$  and $$\sdepth(R/I(T_{n,k}))\leq n(n-1)^{k-2}+\sdepth(K[V(T_{n,k-3})]/I(T_{n,k-3})).$$  
		\end{Lemma}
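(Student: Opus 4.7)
The strategy parallels the proof of Lemma~\ref{nl1}: we exhibit a monomial $u\notin I(T_{n,k})$ such that the colon ideal $(I(T_{n,k}):u)$ has a transparent structure, and then invoke Lemma~\ref{Cor7} together with Lemma~\ref{le3}. The natural choice is
\[
u:=\prod_{i=1}^{n(n-1)^{k-2}} x_i^{(k-1)},
\]
the product of all vertices at the penultimate level $L_{k-1}$. Since the vertices of $L_{k-1}$ form an independent set in $T_{n,k}$, no squarefree quadratic generator of $I(T_{n,k})$ divides $u$, so $u\notin I(T_{n,k})$, and Lemma~\ref{Cor7} gives
\[
\depth(R/I(T_{n,k}))\le \depth(R/(I(T_{n,k}):u)),\qquad \sdepth(R/I(T_{n,k}))\le \sdepth(R/(I(T_{n,k}):u)).
\]

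Next I would describe $(I(T_{n,k}):u)$ explicitly. For each edge $\{x_i^{(k-1)},v\}$ of $T_{n,k}$, the endpoint $v$ belongs to $L_{k-2}\cup L_k$ (and hence does not divide $u$), so $(x_i^{(k-1)}v:u)=(v)$. Edges of $T_{n,k}$ not touching $L_{k-1}$ are coprime to $u$ and persist unchanged. Thus $(I(T_{n,k}):u)$ is generated by the variables in $L_{k-2}\cup L_k$ together with all edges of the induced subgraph of $T_{n,k}$ on $V(T_{n,k})\setminus(L_{k-1}\cup L_{k-2}\cup L_k)=\bigcup_{a=0}^{k-3}L_a$. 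That induced subgraph is precisely $T_{n,k-3}$: the root retains its $n$ children, every internal vertex at levels $1,\dots,k-4$ still has degree $n$, and the vertices of $L_{k-3}$ become the new leaves. Consequently, after killing the linear generators, we obtain the $K$-algebra isomorphism
\[
R/(I(T_{n,k}):u)\;\cong\;\bigl(K[V(T_{n,k-3})]/I(T_{n,k-3})\bigr)\tensor_K K[L_{k-1}],
\]
where $|L_{k-1}|=n(n-1)^{k-2}$. For $k=2$ there is no $T_{n,-1}$, but the argument still goes through: $(I(T_{n,2}):u)$ becomes the ideal generated by $L_0\cup L_2$, yielding $R/(I(T_{n,2}):u)\cong K[L_1]$, a polynomial ring in $n$ variables.

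From these isomorphisms the bounds follow immediately. For $k=2$, Lemma~\ref{le3} (applied trivially) gives $\depth(K[L_1])=\sdepth(K[L_1])=n$, whence $\depth(R/I(T_{n,2})),\sdepth(R/I(T_{n,2}))\le n$. For $k\ge 3$, repeated application of Lemma~\ref{le3} yields
\[
\depth(R/(I(T_{n,k}):u))=n(n-1)^{k-2}+\depth\bigl(K[V(T_{n,k-3})]/I(T_{n,k-3})\bigr),
\]
and likewise for Stanley depth, giving the claimed inequalities. The only mildly delicate point is the identification of the induced subgraph on $\bigcup_{a=0}^{k-3}L_a$ with $T_{n,k-3}$, which is a direct check against the definition but is the step requiring the most care.
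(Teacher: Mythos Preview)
Your proposal is correct and follows essentially the same approach as the paper: the paper also takes $u$ to be the product of all vertices in $L_{k-1}$, applies Lemma~\ref{Cor7}, and uses the isomorphisms $R/(I(T_{n,2}):u)\cong K[L_1]$ and $R/(I(T_{n,k}):u)\cong K[L_{k-1}]\tensor_K K[V(T_{n,k-3})]/I(T_{n,k-3})$ together with Lemma~\ref{le3}. Your write-up simply makes the structure of the colon ideal more explicit than the paper does.
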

		\begin{proof}
			We prove the result only for depth as the proof for Stanley depth is similar. Since  $u:=x_{1}^{(k-1)}x_{2}^{(k-1)}\cdots x^{(k-1)}_{n(n-1)^{k-2}}\notin I(T_{n,k})$, by Lemma  \ref{Cor7}, $\depth (R/I(T_{n,k}))\leq\depth (R/(I(T_{n,k}):u)).$ We have the following $K$-algebra isomorphisms, $R/(I(T_{n,2}):u)\cong K[L_1],$ and for $k\geq 3$,  $R/(I(T_{n,k}):u)\cong K[L_{k-1}]{\tensor_{K}} K[V(T_{n,k-3})]/I(T_{n,k-3}).$ Thus $\depth(R/(I(T_{n,2}):u))=n$ and for $k\geq 3$, by Lemma \ref{le3},   $\depth(R/(I(T_{n,k}):u))=n(n-1)^{k-2}+\depth(K[V(T_{n,k-3})]/I(T_{n,k-3})).$ This completes the proof. 
		\end{proof}

		\begin{Theorem}\label{thn}
			Let  $n\geq 3$ and  $k\geq 1.$ If $R=K[V({T_{n,k}})]$, then 
			
			\begin{equation*}
				\depth(R/I(T_{n,k}))=\sdepth(R/I(T_{n,k}))=\left\{\begin{matrix}
				\frac{n(n-1)^{k+1}+n^{3}-4n^{2}+4n-2}{(n-1)^{3}-1}, & if \quad \quad k\equiv 0\, (\mod\, 3);\\\\
					\frac{n(n-1)^{k+1}-n^{2}+2n-2}{(n-1)^{3}-1}, &if \quad \quad k\equiv 1 \,(\mod\, 3);
					\\	\\\frac{n(n-1)^{k+1}-n}{(n-1)^{3}-1}, &if \quad \quad k\equiv 2\, (\mod\, 3).
				\end{matrix}\right.
			\end{equation*}
			
		\end{Theorem}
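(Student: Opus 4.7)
The plan is to induct on $k$ and mirror the strategy used for Proposition~\ref{leminitial}. For the base case $k=1$, one has $T_{n,1}\cong\mathbb{S}_{n+1}$, so Lemma~\ref{leAli} gives $\depth=\sdepth=1$, matching the target formula in the class $k\equiv 1\,(\mathrm{mod}\,3)$. For $k=2$, the simple short exact sequence $0\to R/(I(T_{n,2}){:}\,x_1^{(0)})\to R/I(T_{n,2})\to R/(I(T_{n,2}),x_1^{(0)})\to 0$ combined with the isomorphisms $R/(I(T_{n,2}){:}\,x_1^{(0)})\cong K[x_1^{(0)}]\tensor_K K[L_2]$ and $R/(I(T_{n,2}),x_1^{(0)})\cong\mathbb{M}_{n,1}^n$ yields, via Lemmas~\ref{rle2} and~\ref{LEMMA1.5}, the lower bound $n$; Lemma~\ref{nl1main} supplies the matching upper bound, and the $k\equiv 2\,(\mathrm{mod}\,3)$ target simplifies to $n$.

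For $k\geq 3$, the lower bound is produced by an iterated short exact sequence argument in the spirit of Proposition~\ref{leminitial}. Setting $A_i:=(x_1^{(1)},\dots,x_i^{(1)})$ for $0\leq i\leq n$ and applying the family
\[
0\to R/((I(T_{n,k})+A_{i-1}){:}\,x_i^{(1)})\to R/(I(T_{n,k})+A_{i-1})\to R/(I(T_{n,k})+A_i)\to 0,
\]
one verifies the key isomorphisms
\[
R/((I+A_{i-1}){:}\,x_i^{(1)})\cong K[x_i^{(1)}]\tensor_K\mathbb{M}_{n,k-1}^{n-i}\tensor_K\mathbb{M}_{n,k-2}^{(i-1)(n-1)}\tensor_K\mathbb{M}_{n,k-3}^{(n-1)^2}
\]
and $R/(I+A_n)\cong K[x_1^{(0)}]\tensor_K\mathbb{M}_{n,k-2}^{n(n-1)}$ by identifying which level-$2$ grandchildren become isolated and which remain roots of smaller perfect $(n-1)$-ary subtrees. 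Lemma~\ref{LEMMA1.5} together with Proposition~\ref{leminitial} turns each depth into a linear combination of $d_j:=\depth(\mathbb{M}_{n,j})$, and the iterated Depth Lemma reduces the lower bound to checking that the minimum of $n+1$ explicit quantities is at least the target. The central observation is that $f(i):=1+(n-i)d_{k-1}+(i-1)(n-1)d_{k-2}+(n-1)^2 d_{k-3}$ has constant increment $(n-1)d_{k-2}-d_{k-1}$ in $i$, which equals $0$, $-1$, or $n-2$ according to $k\,\mathrm{mod}\,3$; this pinpoints the extremum of $f$ over $i$, and a short calculation in each residue class shows that $\min_i f(i)$ and $1+n(n-1)d_{k-2}$ both lie above the target, with equality realized by $1+n(n-1)d_{k-2}$ when $k\equiv 0,1\,(\mathrm{mod}\,3)$ and by $f(1)$ when $k\equiv 2\,(\mathrm{mod}\,3)$.

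The upper bound is immediate from Lemma~\ref{nl1main}: $\depth(R/I(T_{n,k}))\leq n(n-1)^{k-2}+\depth(K[V(T_{n,k-3})]/I(T_{n,k-3}))$, and the induction hypothesis shows the right-hand side simplifies to the target at $k$ in each residue class. The Stanley depth statement is proved by the same recipe, using the Stanley-depth clauses of Lemmas~\ref{rle2},~\ref{LEMMA1.5} (the inequality direction being enough for the lower bound) and the Stanley-depth part of Lemma~\ref{nl1main} for the upper bound. The main obstacle is the residue class $k\equiv 0\,(\mathrm{mod}\,3)$: the naive sequence on $x_1^{(0)}$ alone delivers only $n\cdot d_{k-1}$, which falls short of the target by exactly $1$, so the iterated approach is unavoidable; conceptually the missing $+1$ is restored by the isolated vertex $x_i^{(1)}$ appearing in each $R/((I+A_{i-1}){:}\,x_i^{(1)})$, which is why filtering by the children of the root is needed rather than simply modding out by the root itself.
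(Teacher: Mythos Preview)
Your argument is correct, and it parallels Proposition~\ref{leminitial} more faithfully than the paper's own proof of this theorem does. The paper proceeds differently for $k\geq 3$: instead of iterating over all $n$ children $x_1^{(1)},\dots,x_n^{(1)}$, it uses a \emph{single} short exact sequence. For $k\equiv 1,2\pmod 3$ it splits on the root $x_1^{(0)}$, obtaining
\[
\depth(R/I(T_{n,k}))\geq\min\{\,n(n-1)d_{k-2}+1,\ n\,d_{k-1}\,\},
\]
which already matches the target in those two residue classes. For $k\equiv 0\pmod 3$ the paper splits on a \emph{single} child $x_1^{(1)}$ and observes that $R/(I(T_{n,k}),x_1^{(1)})\cong \mathbb{M}_{n,k}\tensor_K\mathbb{M}_{n,k-2}^{\,n-1}$, so that $d_k$ itself (already known from Proposition~\ref{leminitial}) appears in the lower bound
\[
\depth(R/I(T_{n,k}))\geq\min\{\,(n-1)^2d_{k-3}+(n-1)d_{k-1}+1,\ d_k+(n-1)d_{k-2}\,\}.
\]
The second term here equals your $g=1+n(n-1)d_{k-2}$ because $d_k=(n-1)^2d_{k-2}+1$ when $k\equiv 0\pmod 3$, so the two routes coincide numerically. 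Your iterated filtration has the advantage of treating all three residue classes uniformly and of never needing the value $d_k$ at the current level; the paper's approach is lighter bookkeeping (only two terms in each minimum rather than $n+1$) but requires choosing a different pivot vertex in the $k\equiv 0$ case and invoking the exact value of $\depth(\mathbb{M}_{n,k})$ from Proposition~\ref{leminitial}. The upper bound and the Stanley depth argument are handled identically in both.
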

		\begin{proof}
			First we prove the result for depth. If $k=1$, then $T_{n,1}$ is $(n+1)$-star  and by Lemma \ref{leAli} we have  $\depth(R/I(T_{n,1}))=1$ .
			Let $k\geq 2$. We have the following short exact sequence
			\begin{equation*}	0	\longrightarrow\ R/(I(T_{n,k}):x^{(0)}_{1}) \xrightarrow{\,\,.x^{(0)}_{1}\,}  R/I(T_{n,k}) 
				\longrightarrow\ R/(I(T_{n,k}),x^{(0)}_{1})  \longrightarrow\ 0.
			\end{equation*} Since
			$	 R/(I(T_{n,k}):x^{(0)}_{1}) \cong \mathbb{M}^{n(n-1)}_{n,k-2} {\tensor_{K}}K[x^{(0)}_{1}]$ and
			$ 	R/(I(T_{n,k}),x^{(0)}_{1}) \cong\mathbb{M}^{n}_{n,k-1}.$
			By Lemmas \ref{LEMMA1.5} and \ref{nl}, we have
			$$	\depth (R/(I(T_{n,k}):x^{(0)}_{1})) = n(n-1)\depth(\mathbb{M}_{n,k-2})+1 ,$$ and
			$$	\depth (R/(I(T_{n,k}),x^{(0)}_{1})) = n\depth (\mathbb{M}_{n,k-1}).$$	
			Thus by Lemma \ref{rle2}, 
			\begin{equation}\label{eq1main}
				\depth(R/I(T_{n,k}))\geq \min\{n(n-1)\depth(\mathbb{M}_{n,k-2})+1 ,n\depth (\mathbb{M}_{n,k-1})\}.  
			\end{equation}
			
			If $k=2$ then by using Eq. \ref{eq1main} and Proposition \ref{leminitial}		
			$$\depth(R/I(T_{n,2}))\geq \min\{n(n-1)\depth(\mathbb{M}_{n,0})+1 ,n\depth (\mathbb{M}_{n,1})\}=\min\{n(n-1)+1 ,n\}=n,$$ and by Lemma 	\ref{nl1main}, 
			$\depth(R/I(T_{n,2}))\leq n.$ Thus  $\depth(R/I(T_{n,2}))=n.$ This proves the result for $k=2.$ Let $k\geq 3.$ Consider the following short exact sequence
			\begin{equation*} 
				0	\longrightarrow\ R/(I(T_{n,k}):x^{(1)}_{1}	) \xrightarrow{\,\,.	x^{(1)}_{1}\,}  R/I(T_{n,k}) 
				\longrightarrow\ R/(I(T_{n,k}),x^{(1)}_{1}	)  \longrightarrow\ 0.
			\end{equation*}	
			We have		  $$   R/(I(T_{n,k}):x^{(1)}_{1}) \cong \mathbb{M}^{(n-1)^2}_{n,k-3}{\tensor_{K}}\mathbb{M}^{(n-1)}_{n,k-1} {\tensor_{K}}K[x^{(1)}_{1}],$$ 
			$$	R/(I(T_{n,k}),x^{(1)}_{1}) \cong \mathbb{M}_{n,k} {\tensor_{K}}\mathbb{M}^{(n-1)}_{n,k-2}.$$   
			By using Lemmas \ref{LEMMA1.5} and \ref{nl}, 			\begin{equation*}
				\depth (R/(I(T_{n,k}):x^{(1)}_{1})) = (n-1)^2	\depth (\mathbb{M}_{n,k-3})+(n-1)\depth (\mathbb{M}_{n,k-1})+1,
			\end{equation*}
			and
			$$	\depth (R/(I(T_{n,k}),x^{(1)}_{1})) =\depth (\mathbb{M}_{n,k})+ (n-1)\depth(\mathbb{M}_{n,k-2}).$$
			Thus by Lemma \ref{rle2}, 
			\begin{multline}\label{eq1main2}
				\depth(R/I(T_{n,k}))\geq \min\{(n-1)^2	\depth (\mathbb{M}_{n,k-3})+(n-1)\depth (\mathbb{M}_{n,k-1})+1 ,\\\depth (\mathbb{M}_{n,k})+ (n-1)\depth(\mathbb{M}_{n,k-2})\}.  \end{multline}
			If $k=3$ then by Eq. \ref{eq1main2}  and Proposition \ref{leminitial}		
			\begin{multline*}				\depth(R/I(T_{n,k}))\geq\\ \min\Big\{(n-1)^2	\depth (\mathbb{M}_{n,0})+(n-1)\depth (\mathbb{M}_{n,2})+1 ,\depth (\mathbb{M}_{n,3})+ (n-1)\depth(\mathbb{M}_{n,1})\Big\}\\=\min\{(n-1)^2+(n-1)^2+1 ,(n-1)^2+1+n-1\}\\=\min\{ (n-1)^{2}+n+(n-1)^{2}-n+1 ,(n-1)^{2}+n\}=(n-1)^{2}+n.  \end{multline*}	
			and by using Lemma \ref{nl1main} we have $\depth(R/I(T_{n,k}))\leq n(n-1)+1=(n-1)^2+n.$ Thus $\depth(R/I(T_{n,k}))= (n-1)^2+n,$ as required. Now let $k\geq 4.$
			We consider three cases.
			\begin{description}
				\item[Case 1] Let $ k\equiv 1\, (\mod \, 3)$.   Recall Eq. \ref{eq1main}
				\begin{equation*}
					\depth(R/I(T_{n,k}))\geq \min\{n(n-1)\depth(\mathbb{M}_{n,k-2})+1 ,n\depth (\mathbb{M}_{n,k-1})\}.  
				\end{equation*}
				
				Since $ k\equiv 1\, (\mod \, 3)$ so $ k-1\equiv 0\, (\mod \, 3)$ and $ k-2\equiv 2\, (\mod \, 3)$, thus by using Proposition \ref{leminitial}, we have
				\begin{equation*}
					n(n-1)\depth(\mathbb{M}_{n,k-2})+1
					= n(n-1)\bigg( \frac{(n-1)^{(k-2)+2}-n+1}{(n-1)^{3}-1} \bigg)+1=\frac{n(n-1)^{k+1}-n^{2}+2n-2}{(n-1)^{3}-1} ,
				\end{equation*} and 
				\begin{multline*}
					n\depth (\mathbb{M}_{n,k-1})= n\bigg( 	\frac{(n-1)^{2}((n-1)^{k-1}-1)}{(n-1)^{3}-1}+1\bigg)=\frac{n(n-1)^{k+1}+n^{4}-4n^{3}+5n^{2}-3n}{(n-1)^{3}-1}\\=\frac{n(n-1)^{k+1}-n^{2}+2n-2}{(n-1)^{3}-1}+\frac{n^{4}-4n^{3}+6n^{2}-5n+2}{(n-1)^{3}-1}.
				\end{multline*}
				Note that $\frac{n^{4}-4n^{3}+6n^{2}-5n+2}{(n-1)^{3}-1}>0$, for all $n\geq 3$ by using MATLAB\,\textsuperscript{\tiny\textregistered}. Thus we have $  
				\depth (R/I(T_{n,k}))\geq \frac{n(n-1)^{k+1}-n^{2}+2n-2}{(n-1)^{3}-1}.$
				By Lemma \ref{nl1main}, $\depth(R/I(T_{n,k}))\leq n(n-1)^{k-2}+\depth(K[V(T_{n,k-3})]/I(T_{n,k-3})),$ since $k-3\equiv 1\, (\mod \, 3),$ thus by induction on $k,$ we have $\depth(R/I(T_{n,k}))\leq n(n-1)^{k-2}+ \frac{n(n-1)^{(k-3)+1}-n^{2}+2n-2}{(n-1)^{3}-1}=\frac{n(n-1)^{k+1}-n^{2}+2n-2}{(n-1)^{3}-1}.$ 	Hence we get
				$\depth (R/I(T_{n,k}))=\frac{n(n-1)^{k+1}-n^{2}+2n-2}{(n-1)^{3}-1},$  as required. 
				\item[Case 2] 		
				Let $ k\equiv 2\, (\mod \,3)$.   
				In this case  $ k-2\equiv 0\, (\mod \, 3),$  $ k-1\equiv 1\, (\mod \, 3)$, by using Proposition \ref{leminitial}, we have 
				\begin{multline*}
					n(n-1)\depth(\mathbb{M}_{n,k-2})+1
					=   n(n-1)\bigg( \frac{(n-1)^{2}((n-1)^{k-2}-1)}{(n-1)^{3}-1}+1\bigg)+1\\= \frac{n(n-1)^{k+1}+n^{5}-5n^{4}+10n^{3}-11n^{2}+6n-2}{(n-1)^{3}-1}\\=\frac{n(n-1)^{k+1}-n}{(n-1)^{3}-1}+\frac{n^{5}-5n^{4}+10n^{3}-11n^{2}+7n-2}{(n-1)^{3}-1},
				\end{multline*}
				 	
				and	\begin{equation*}	n\depth (\mathbb{M}_{n,k-1})=n\bigg(  \frac{(n-1)^{(k-1)+2}-1}{(n-1)^{3}-1}\bigg)=\frac{n(n-1)^{k+1}-n}{(n-1)^{3}-1}.	\end{equation*} Note that $\frac{n^{5}-5n^{4}+10n^{3}-11n^{2}+7n-2}{(n-1)^{3}-1}>0,$ for all $n\geq 3$ by using MATLAB\,\textsuperscript{\tiny\textregistered}. By Eq. \ref{eq1main}, we get
				$\depth (R/I(T_{n,k}))\geq \frac{n(n-1)^{k+1}-n}{(n-1)^{3}-1}.$   
				For the other inequality, we again  use Lemma \ref{nl1main}, that is $\depth(R/I(T_{n,k}))\leq n(n-1)^{k-2}+\depth(K[V(T_{n,k-3})]/I(T_{n,k-3})),$ since $k-3\equiv 2\, (\mod \, 3),$ thus by induction on $k,$ we have
				$ 	\depth (R/I(T_{n,k}))\leq	 \frac{n(n-1)^{(k-3)+1}-n}{(n-1)^{3}-1}+n(n-1)^{k-2}= \frac{n(n-1)^{k+1}-n}{(n-1)^{3}-1}.$
				Hence	$	\depth (R/I(T_{n,k}))=\frac{n(n-1)^{k+1}-n}{(n-1)^{3}-1}.   $ 
				\item[Case 3] Let $ k\equiv 0\, (\mod \, 3)$. Recall Eq. \ref{eq1main2}
				\begin{multline*}
					\depth(R/I(T_{n,k}))\geq \min\{(n-1)^2	\depth (\mathbb{M}_{n,k-3})+(n-1)\depth (\mathbb{M}_{n,k-1})+1 ,\\\depth (\mathbb{M}_{n,k})+ (n-1)\depth(\mathbb{M}_{n,k-2})\}.  \end{multline*}
				Since $ k\equiv 0\, (\mod \, 3)$ so $ k-3\equiv 0\, (\mod \, 3)$, $ k-1\equiv 2\, (\mod \, 3)$ and  $k-2\equiv 1\, (\mod \, 3).$ By using Proposition \ref{leminitial}, we get 
				\begin{multline*}
					(n-1)^2	\depth (\mathbb{M}_{n,k-3})+(n-1)\depth (\mathbb{M}_{n,k-1})+1
					= (n-1)^{2}\bigg( \frac{(n-1)^{2}((n-1)^{k-3}-1)}{(n-1)^{3}-1}+1\bigg) +\\(n-1)\bigg( \frac{(n-1)^{(k-1)+2}-n+1}{(n-1)^{3}-1}\bigg)+1 =\frac{n(n-1)^{k+1}+n^{5}-6n^{4}+15n^{3}-21n^{2}+16n-6}{(n-1)^{3}-1}\\=\frac{n(n-1)^{k+1}+n^{3}-4n^{2}+4n-2}{(n-1)^{3}-1}+\frac{n^{5}-6n^{4}+14n^{3}-17n^{2}+12n-4}{(n-1)^{3}-1},
				\end{multline*}
				and

				\begin{multline*} \depth (\mathbb{M}_{n,k})+ (n-1)\depth(\mathbb{M}_{n,k-2})= \frac{(n-1)^{2}((n-1)^{k}-1)}{(n-1)^{3}-1}+1+(n-1)\bigg( \frac{(n-1)^{(k-2)+2}-1}{(n-1)^{3}-1}\bigg) \\=\frac{n(n-1)^{k+1}+n^{3}-4n^{2}+4n-2}{(n-1)^{3}-1}.	\end{multline*}
				Note that $\frac{n^{5}-6n^{4}+14n^{3}-17n^{2}+12n-4}{(n-1)^{3}-1}>0$, for all $n\geq3$ by using MATLAB\,\textsuperscript{\tiny\textregistered}. By Eq. \ref{eq1main2}, we get   $ 
				\depth(R/I(T_{n,k}))\geq \frac{n(n-1)^{k+1}+n^{3}-4n^{2}+4n-2}{(n-1)^{3}-1}.$
				Again by Lemma \ref{nl1main} and induction on $k$, we have  $\depth(R/I(T_{n,k}))\leq n(n-1)^{k-2}+\depth(K[V(T_{n,k-3})]/I(T_{n,k-3}))=\frac{n(n-1)^{(k-3)+1}+n^{3}-4n^{2}+4n-2}{(n-1)^{3}-1}+n(n-1)^{k-2}=\frac{n(n-1)^{k+1}+n^{3}-4n^{2}+4n-2}{(n-1)^{3}-1}.$ This completes the proof for depth. Proof for Stanley depth is similar.

			\end{description}
		\end{proof}

		\begin{Corollary} \label{corsub}
			Let  $n\geq 3$ and  $k\geq 1.$ If $R:=K[V({T_{n,k}})]$, then 
			\begin{equation*} \label{pdimm}
				\pdim(R/I(T_{n,k}))=\left\{\begin{matrix}
					\frac{n(n-1)^{k}-2}{n-2}-\frac{n(n-1)^{k+1}+n^{3}-4n^{2}+4n-2}{(n-1)^{3}-1}, & if \quad \quad k\equiv 0\, (\mod\, 3);\\\\	\frac{n(n-1)^{k}-2}{n-2}-\frac{n(n-1)^{k+1}-n^{2}+2n-2}{(n-1)^{3}-1}, & if \quad \quad k\equiv 1 \,(\mod\, 3)
					;\\	\\\frac{n(n-1)^{k}-2}{n-2}-\frac{n(n-1)^{k+1}-n}{(n-1)^{3}-1} ,&if \quad \quad k\equiv 2\, (\mod\, 3).
				\end{matrix}\right.
			\end{equation*}
		\end{Corollary}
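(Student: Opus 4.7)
The plan is to obtain $\pdim(R/I(T_{n,k}))$ as an immediate consequence of the Auslander--Buchsbaum formula, exactly as was done for the earlier Corollary computing $\pdim(\mathbb{M}_{n,k})$. Since $R=K[V(T_{n,k})]$ is a polynomial ring in $|V(T_{n,k})|$ variables, it is Cohen--Macaulay and
\[
\depth(R)=|V(T_{n,k})|=\sum_{a=0}^{k}|L_a|=\frac{n(n-1)^{k}-2}{n-2},
\]
as computed in the preliminary section. Applying \cite[Theorem~1.3.3]{depth} to the finitely generated graded $R$-module $R/I(T_{n,k})$ then yields
\[
\pdim(R/I(T_{n,k}))=\depth(R)-\depth(R/I(T_{n,k}))=\frac{n(n-1)^{k}-2}{n-2}-\depth(R/I(T_{n,k})).
\]

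Once this identity is in place, the three cases of the statement follow by substituting the three formulas for $\depth(R/I(T_{n,k}))$ provided by Theorem~\ref{thn}, according to the residue class of $k$ modulo $3$. No extra combinatorial or homological argument is needed: the work has already been absorbed into Theorem~\ref{thn}.

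There is essentially no obstacle; the only thing to watch is book-keeping of the algebra in each of the three cases to confirm that the differences $\frac{n(n-1)^{k}-2}{n-2}-\frac{n(n-1)^{k+1}+\cdots}{(n-1)^{3}-1}$ reduce to the stated closed forms, but these are purely mechanical simplifications over a common denominator. Thus the proof can be given in one line, stating that it follows from Auslander--Buchsbaum and Theorem~\ref{thn}, exactly in the style of the analogous Corollary for $\pdim(\mathbb{M}_{n,k})$.
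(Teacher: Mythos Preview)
Your proposal is correct and follows exactly the same approach as the paper: the paper's proof is a single line invoking the Auslander--Buchsbaum formula \cite[Theorem 1.3.3]{depth} together with Theorem~\ref{thn}, using that $\depth(R)=|V(T_{n,k})|=\frac{n(n-1)^{k}-2}{n-2}$.
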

		\begin{proof}
			The required result can be obtain by using Auslander–Buchsbaum formula {\cite[Theorems 1.3.3]{depth}} and Theorem \ref{thn}.
		\end{proof}

		\section{Regularity and Krull dimension}
		In this section, we  first compute regularity for cyclic module $\mathbb{M}_{n,k}$, after that we find out the regularity of $K[V(T_{n,k})]/I(T_{n,k})$. At the end, we compute  the Krull dimension for $K[V(T_{n,k})]/I(T_{n,k})$.

		\begin{Proposition} \label{lemaan}
			Let $n\geq 3$ and $k\geq 1$. Then
			
			\begin{equation*}
				\reg(\mathbb{M}_{n,k})=\left\{\begin{matrix}
					\frac{(n-1)^{k+2}-(n-1)^{2}}{(n-1)^{3}-1}, & if \quad \quad k\equiv 0\, (\mod\, 3);\\ \\ \frac{(n-1)^{k+2}-1}{(n-1)^{3}-1}, &if \quad \quad k\equiv 1 \,(\mod\, 3);
					\\	\\\frac{(n-1)^{k+2}-n+1}{(n-1)^{3}-1}, &if \quad \quad k\equiv 2\, (\mod\, 3).
				\end{matrix}\right.
			\end{equation*}
		\end{Proposition}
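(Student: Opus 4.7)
My plan is to exploit the fact that $T'_{n,k}$ is a tree and therefore a chordal graph, so by Lemma \ref{reg1} the problem reduces to computing the induced matching number $\indmat(T'_{n,k})$. I would then establish the recursion
\[
\indmat(T'_{n,k}) = \indmat(T'_{n,k-3}) + (n-1)^{k-1}
\]
for $k \geq 4$, together with the base cases $\indmat(T'_{n,1}) = 1$, $\indmat(T'_{n,2}) = n-1$, and $\indmat(T'_{n,3}) = (n-1)^2$ (each of which is routine to verify directly from the structure of $T'_{n,k}$). The closed formula then follows by a straightforward induction on $k$ in each residue class modulo $3$.

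For the lower bound in the recursion, I would construct an induced matching explicitly: for each of the $(n-1)^{k-1}$ vertices at level $k-1$, pick one edge to one of its children at level $k$. These edges form an induced matching because distinct level-$(k-1)$ vertices are non-adjacent and their children lie in disjoint subtrees. The induced subgraph of $T'_{n,k}$ on levels $0$ through $k-3$ is precisely $T'_{n,k-3}$, and since no vertex at level $\leq k-3$ is adjacent to a vertex at level $k-1$ or $k$, an optimal induced matching of $T'_{n,k-3}$ can be appended without interference, giving the desired lower bound.

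For the upper bound, given any induced matching $M$ of $T'_{n,k}$, I would split $M = M_{0}\cup M'$, where $M_{0}$ consists of the edges with both endpoints at levels $\leq k-3$ (so $|M_{0}| \leq \indmat(T'_{n,k-3})$), and $M'$ consists of the remaining edges. The idea is then a charging argument: assign each edge of $M'$ to a unique vertex $u$ at level $k-2$, namely to its level-$(k-2)$ endpoint if the edge lies at level $(k-3,k-2)$ or $(k-2,k-1)$, and to the grandparent of its leaf endpoint if the edge lies at level $(k-1,k)$. The induced-matching condition implies that if $u$ is itself incident to some edge of $M$ (going up to its parent or down to a child), then no leaf-edge in $u$'s subtree can belong to $M$, since the forbidden tree edge $\{u,v\}$ would re-enter the induced subgraph on the four endpoints; hence at most one edge is charged to $u$ in this case. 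Otherwise, at most one leaf-edge per child of $u$ is charged to $u$, giving at most $n-1$. Summing over the $(n-1)^{k-2}$ vertices of level $k-2$ yields $|M'| \leq (n-1)^{k-1}$, completing the upper bound.

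The hard part will be making this charging argument airtight, in particular verifying that the induced-matching condition genuinely forbids leaf-edges in $u$'s subtree whenever $u$ itself participates in an edge of $M$; this is the step where the tree structure really enters. Once the recursion is proved, the three cases of the formula follow by induction, the shifts in the numerator ($-(n-1)^2$, $-1$, $-n+1$) being dictated by the base cases at $k=3$, $k=1$, $k=2$, respectively, via the routine identity $(n-1)^{k-1}\bigl((n-1)^3-1\bigr) = (n-1)^{k+2} - (n-1)^{k-1}$.
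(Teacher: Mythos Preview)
Your proposal is correct and in fact takes a cleaner route than the paper. The key observation you make at the outset---that $T'_{n,k}$ is a tree, hence chordal, so Lemma~\ref{reg1} gives $\reg(\mathbb{M}_{n,k})=\indmat(T'_{n,k})$ immediately---reduces the entire problem to a purely combinatorial computation of the induced matching number. The paper, by contrast, does not invoke chordality uniformly: it runs the algebraic machinery of Lemma~\ref{regul} (comparing $\reg(S/(I:x_1^{(0)}))$ and $\reg(S/(I,x_1^{(0)}))$) together with Lemma~\ref{circulenttineq} in the $k\equiv 0$ case, and only appeals to induced matchings for the lower bounds. Your charging argument for the upper bound $|M'|\le (n-1)^{k-1}$ is sound; the crucial point---that if a level-$(k-2)$ vertex $u$ meets an edge of $M$ then the edge $\{u,v\}$ to any child $v$ obstructs every leaf-edge below $u$---is exactly what the induced-matching condition says, and your case analysis (up-edge vs.\ down-edge at $u$) handles it. The three-term recursion $\indmat(T'_{n,k})=\indmat(T'_{n,k-3})+(n-1)^{k-1}$ with base cases $1,\,n-1,\,(n-1)^2$ then yields the stated closed forms by a routine geometric-series computation. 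What your approach buys is a uniform, self-contained combinatorial argument that avoids the case-by-case interplay between Lemma~\ref{regul}(b), (c), and Lemma~\ref{circulenttineq}; what the paper's approach buys is a template that transfers verbatim to the proof of the analogous Theorem for $T_{n,k}$, where the colon/comma isomorphisms are already set up from the depth computation.
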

		
		\begin{proof}
			We will prove the result by induction on $k$. 	If $k=1$, then clearly $\indmat(T'_{n,1})=1,$ therefore  by  Lemma \ref{reg1}, we get  $\reg (\mathbb{M}_{n,1})=1.$ Let $k\geq 2$ and $S=K[V(T'_{n,k})].$ As we have noticed in Proposition \ref{leminitial},				$S/(I(T'_{n,k}):x^{(0)}_{1}) \cong \mathbb{M}^{(n-1)^2}_{n,k-2}\tensor_{K} K[x^{(0)}_{1}]$	and  $S/(I(T'_{n,k}),x^{(0)}_{1})\cong\mathbb{M}^{n-1}_{n,k-1}$. By 
			Lemmas \ref{le3} and \ref{nl} , we have
			\begin{equation}\label{Eq.11}		
				\reg(S/(I(T'_{n,k}):x^{(0)}_{1}))=\reg{(\mathbb{M}^{(n-1)^2}_{n,k-2}\tensor_{K} K[x^{(0)}_{1}])}=\reg{(\mathbb{M}^{(n-1)^2}_{n,k-2})} = (n-1)^2\reg (\mathbb{M}_{n,k-2}),	
			\end{equation}
			\begin{equation}\label{Eq.12}
				\reg (S/(I(T'_{n,k}),x^{(0)}_{1}))=\reg{(\mathbb{M}^{n-1}_{n,k-1})} = (n-1)\reg(\mathbb{M}_{n,k-1}).		\end{equation}	
		 If $k=2$, we have $\reg(S/(I(T'_{n,2}):x^{(0)}_{1}))=0$ 
		 and $\reg(S/(I(T'_{n,2}),x^{(0)}_{1}))=(n-1)\reg(\mathbb{M}_{n,1})=n-1.$ Hence by Lemma \ref{regul}(c), we have $\reg(\mathbb{M}_{n,2})=n-1.$ 
			For $k=3$, we have $\reg(S/(I(T'_{n,3}):x^{(0)}_{1}))=(n-1)^{2}\reg(\mathbb{M}_{n,1})=(n-1)^{2}$ and  $\reg(S/(I(T'_{n,3}),x^{(0)}_{1}))=(n-1)\reg(\mathbb{M}_{n,2})=(n-1)(n-1)=(n-1)^{2}.$ Therefore, by Lemma \ref{regul}(b), we have $\reg(\mathbb{M}_{n,3})\in \{(n-1)^{2}+1,(n-1)^{2}\}.$ Let us consider the following subsets of $E(T'_{n,3})$. 
			\begin{equation*}
				F_{1}= \underset{j=1}{\overset{n-1}{\bigcup}}\{\{x_{1}^{(2)},x_{j}^{(3)}\}\},
			\end{equation*} 
			\begin{equation*}
				F_{2}= \underset{j=(n-1)+1}{\overset{2(n-1)}{\bigcup}}\{\{x_{2}^{(2)},x_{j}^{(3)}\}\},
			\end{equation*} 
			
			\begin{center}	$\vdots$
			\end{center}	
			\begin{equation*}
				F_{(n-1)^{2}}= \underset{j=(n-2)(n-1)^{2}+(n-2)(n-1)+1}{\overset{(n-1)^{3}}{\bigcup}}\{\{x^{(2)}_{(n-1)^{2}},x_{j}^{(3)}\}\}.\end{equation*} 
			Clearly, $|F_i|=n-1$, for all $i$. Also each edge of $F_i$ for all $i$ has one vertex in $L'_{2}$ and other vertex in $L'_{3}$. Let $F':=\{e_{1},e_{2},\dots,e_{(n-1)^{2}}:e_i\in F_i\}$, it is easy to see that $F'$ is an induced matching and $|F'|=(n-1)^{2}.$
			If $F''$ is any induced matching such that it contains an edge between vertices of $L'_{1}$ and $L'_{2}$ or an edge between vertices $L'_0$ and $L'_{1}$, then it  leads to deletion of $(n-1)$ edges between $L_1'$ and $L_2'$ in $F''$. Since, $|L'_3|=(n-1)^3$, $|L'_2|=(n-1)^2$ and $|L'_1|=n-1$, thus we get $|F''|<|F'|.$  Hence $F'$ is a maximal induced matching and by Lemma \ref{reg1}, we have $\reg (\mathbb{M}_{n,3})=(n-1)^{2}$. Thus result follows for $k=3$. Let $k\geq 4$.
			We consider three cases:
			\begin{description}

				\item[Case 1] Let $ k\equiv 1(\, \mod \, 3).$  
			In this case $ k-2\equiv 2\, (\mod \, 3)$   and $ k-1\equiv 0\, (\mod \, 3)$,  by induction on $k$ and using Eqs. \ref{Eq.11} and \ref{Eq.12} we get
				\begin{equation*}
				    \reg(S/(I(T'_{n,k}):x^{(0)}_{1})) =(n-1)^{2}\bigg( \frac{(n-1)^{(k-2)+2}-n+1}{(n-1)^{3}-1}\bigg) =\frac{(n-1)^{k+2}-(n-1)^{3}}{(n-1)^{3}-1},
				\end{equation*}
				\begin{equation*}
					\reg(S/(I(T'_{n,k}),x^{(0)}_{1})) =(n-1)\bigg( \frac{(n-1)^{(k-1)+2}-(n-1)^{2}}{(n-1)^{3}-1}\bigg) =\frac{(n-1)^{k+2}-(n-1)^{3}}{(n-1)^{3}-1}.	\end{equation*}
				Since $\reg(S/(I(T'_{n,k}):x^{(0)}_{1}))=\reg(S/(I(T'_{n,k}),x^{(0)}_{1}))$, by	Lemma \ref{regul}(b),
				  $
				\reg(\mathbb{M}_{n,k}) \leq \reg(S/(I(T'_{n,k}),x^{(0)}_{1}))+1=\frac{(n-1)^{k+2}-1}{(n-1)^{3}-1}.$
				For the other inequality, let us define 
				\begin{equation}\label{inducematch}
					F_{(k-1,k)}= \underset{i=1}{\overset{(n-1)^{k-1}}{\bigcup}}\{\{x_{i}^{(k-1)},x_{(n-1)i}^{(k)} \}\},
				\end{equation} 
				and we have     $|F_{(k-1,k)}|=(n-1)^{k-1}.$
			Consider 
				$F=F_{(k-1,k)}\cup F_{(k-4,k-3)}\cup \dots \cup F_{(0,1)}.$ It is easy to see that $F$ is an induced matching.
			  Therefore,
				$	\indmat(T'_{n,k})\geq|F|,$
				where	  $				|F|=(n-1)^{k-1}+(n-1)^{k-4}+\dots+ (n-1)^{3}+(n-1)^{0}= \frac{(n-1)^{k+2}-1}{(n-1)^{3}-1}.     $
			By Lemma \ref{reg1}, we have
				$\reg (\mathbb{M}_{n,k}) \geq \frac{(n-1)^{k+2}-1}{(n-1)^{3}-1}.$
				Therefore, we get the required result.
				
				\item[Case 2] Let $ k\equiv 2\, (\mod \, 3).$ In this case $ k-2\equiv 0\, (\mod \, 3)$ and $ k-1\equiv 1\, (\mod \, 3)$. By Eqs. \ref{Eq.11} and \ref{Eq.12} and 	induction on $k$ we get
				\begin{equation*}
					\reg(S/(I(T'_{n,k}):x^{(0)}_{1}))= (n-1)^{2}\bigg( \frac{(n-1)^{(k-2)+2}-(n-1)^{2}}{(n-1)^{3}-1}\bigg)=\frac{(n-1)^{k+2}-(n-1)^{4}}{(n-1)^{3}-1},
				\end{equation*}
				and
				\begin{multline*}
					\reg(S/(I(T'_{n,k}),x^{(0)}_{1}))=(n-1)\bigg( \frac{(n-1)^{(k-1)+2}-1}{(n-1)^{3}-1}\bigg) =\frac{(n-1)^{k+2}-n+1}{(n-1)^{3}-1}\\=\frac{(n-1)^{k+2}-(n-1)^{4}}{(n-1)^{3}-1}+(n-1).\end{multline*} 
				Hence by Lemma \ref{regul}(c)  
				we have $\reg(\mathbb{M}_{n,k})=\frac{(n-1)^{k+2}-n+1}{(n-1)^{3}-1},$ as required.  
					\item[Case 3] Let $ k\equiv 0\, (\mod \, 3).$  
				If $T'_{n,k}=T'_{n,3}\cup 
	\underset{i=1}{\overset{(n-1)^{3}}{ \bigcup B_{i}}},$ where $B_{i}\cong T'_{n,k-3}$, $T'_{n,3}\cap B_{i}\neq \emptyset$ and  $B_{i}\cap B_{j}=\emptyset,$ for all $i\neq j.$ 
By Lemmas \ref{circulenttineq} and \ref{nl},  we have
			$$\reg(\mathbb{M}_{n,k})\leq \reg(\mathbb{M}_{n,3})+ \reg(\mathbb{M}^{(n-1)^{3}}_{n,k-3})= \reg(\mathbb{M}_{n,3})+ (n-1)^{3}\reg(\mathbb{M}_{n,k-3}).$$
		In this case $ k-3\equiv 0\, (\mod \, 3)$  	and by induction on $k$, we have
				\begin{equation*}
					\reg (\mathbb{M}_{n,k}) \leq (n-1)^{2}+ (n-1)^{3}\bigg( \frac{(n-1)^{(k-3)+2}-(n-1)^{2}}{(n-1)^{3}-1}\bigg)=\frac{(n-1)^{k+2}-(n-1)^{2}}{(n-1)^{3}-1}.\end{equation*}
			 	For the other inequality, we use Eq. \ref{inducematch} and define an induced matching     $F=F_{(k-1,k)}\cup F_{(k-4,k-3)}\cup \dots \cup F_{(2,3)}.$ 
			 As,
				$	\indmat(T'_{n,k})\geq|F|,$ 
				where	$   	
		|F|=(n-1)^{k-1}+(n-1)^{k-4}+\dots+ (n-1)^{5}+(n-1)^{2}= \frac{(n-1)^{k+2}-(n-1)^{2}}{(n-1)^{3}-1}.$	By Proposition \ref{reg1}(a), we have 
				$	\reg (\mathbb{M}_{n,k}) \geq \frac{(n-1)^{k+2}-(n-1)^{2}}{(n-1)^{3}-1}.$	Hence we get the required result. 
			\end{description}
		\end{proof}
		\begin{Theorem}
			Let  $n\geq 3$ and  $k\geq 1.$ If $R:=K[V({T_{n,k}})]$, then 
			\begin{equation*}
				\reg(R/I(T_{n,k}))=\left\{\begin{matrix}
					\frac{n(n-1)^{k+1}-n(n-1)}{(n-1)^{3}-1}, & if \quad \quad k\equiv 0\, (\mod\, 3);\\ \\ \frac{n(n-1)^{k+1}-(n-1)^{2}-1}{(n-1)^{3}-1}, &if \quad \quad k\equiv 1 \,(\mod\, 3);
					\\	\\\frac{n(n-1)^{k+1}-n}{(n-1)^{3}-1}, &if \quad \quad k\equiv 2\, (\mod\, 3).
				\end{matrix}\right.
			\end{equation*}
		\end{Theorem}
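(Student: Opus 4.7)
The plan is to mirror the strategy of Proposition~\ref{lemaan} and Theorem~\ref{thn}: combine the short exact sequence obtained by splitting on $x_1^{(0)}$ (for upper bounds) with an explicit induced-matching construction, exploiting that $T_{n,k}$ is a tree and hence chordal (Lemma~\ref{reg1}), for lower bounds. The base cases $k=1,2,3$ are settled directly: $T_{n,1}=\mathbb{S}_{n+1}$ has $\indmat=1$, so Lemma~\ref{reg1} gives $\reg(R/I(T_{n,1}))=1$; for $T_{n,2}$ an SES argument via $x_1^{(0)}$ combined with Proposition~\ref{lemaan} and Lemma~\ref{regul}(c) yields $\reg(R/I(T_{n,2}))=n$; and for $T_{n,3}$ the same SES gives two candidates via Lemma~\ref{regul}(b), between which an induced-matching construction (one child-edge per level-$2$ vertex) pins down $\reg(R/I(T_{n,3}))=n(n-1)$.

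For $k\ge 4$, I would use the short exact sequence
\[
0\to R/(I(T_{n,k}):x_1^{(0)})\xrightarrow{\,\cdot x_1^{(0)}\,}R/I(T_{n,k})\to R/(I(T_{n,k}),x_1^{(0)})\to 0,
\]
together with the isomorphisms $R/(I(T_{n,k}):x_1^{(0)})\cong\mathbb{M}_{n,k-2}^{n(n-1)}\otimes_K K[x_1^{(0)}]$ and $R/(I(T_{n,k}),x_1^{(0)})\cong\mathbb{M}_{n,k-1}^{n}$ already established in the proof of Theorem~\ref{thn}. Lemmas~\ref{le3} and~\ref{nl} then reduce the two end regularities to $n(n-1)\reg(\mathbb{M}_{n,k-2})$ and $n\reg(\mathbb{M}_{n,k-1})$, and inserting the closed-form values from Proposition~\ref{lemaan} makes them explicit functions of $k\pmod 3$, ready to feed into Lemma~\ref{regul}.

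The case $k\equiv 2\pmod 3$ is immediate from Lemma~\ref{regul}(c), since direct computation shows $\reg(R/(I:x_1^{(0)}))=\tfrac{n(n-1)^{k+1}-n(n-1)^3}{(n-1)^3-1}<\tfrac{n(n-1)^{k+1}-n}{(n-1)^3-1}=\reg(R/(I,x_1^{(0)}))$. In the remaining two cases the two regularities coincide at the common value $V=\tfrac{n(n-1)^{k+1}-n(n-1)^2}{(n-1)^3-1}$ (Case 1) or $V=\tfrac{n(n-1)^{k+1}-n(n-1)}{(n-1)^3-1}$ (Case 3), so Lemma~\ref{regul}(b) leaves the candidates $V$ and $V+1$. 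For $k\equiv 1\pmod 3$, the larger candidate simplifies via $(n-1)^3-1-n(n-1)^2=-(n-1)^2-1$ to the claimed target $\tfrac{n(n-1)^{k+1}-(n-1)^2-1}{(n-1)^3-1}$, so only a matching lower bound is needed to force equality. For $k\equiv 0\pmod 3$ the smaller candidate is the target, so a separate upper bound is required; I would use the decomposition $T_{n,k}=T_{n,3}\cup\bigcup_{i=1}^{n(n-1)^2}B_i$ with each $B_i\cong T'_{n,k-3}$ rooted at a level-$3$ vertex. Since the $B_i$'s have pairwise disjoint vertex supports, Lemmas~\ref{circulentteq} and~\ref{nl} give $\reg(R/\sum_i I(B_i))=n(n-1)^2\reg(\mathbb{M}_{n,k-3})$, and then Lemma~\ref{circulenttineq} bounds $\reg(R/I(T_{n,k}))\le\reg(R/I(T_{n,3}))+n(n-1)^2\reg(\mathbb{M}_{n,k-3})$, which collapses to the target using the $k=3$ base case and Proposition~\ref{lemaan}.

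For the matching lower bound required in Cases 1 and 3, I would invoke the chordal property of $T_{n,k}$ via Lemma~\ref{reg1} and construct an explicit induced matching $F$ whose size matches the claimed formula: select one child-edge at each non-leaf vertex on levels $k-1,k-4,\dots$, truncating at level $2$ when $k\equiv 0\pmod 3$, at level $3$ together with a single root edge when $k\equiv 1\pmod 3$, and at level $1$ when $k\equiv 2\pmod 3$. Such an $F$ is induced because consecutive chosen level pairs are separated by at least three levels of $T_{n,k}$, so no additional tree edge can connect the endpoints of two selected edges; the cardinalities form geometric series in $(n-1)^3$ collapsing exactly to the claimed closed forms. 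The main technical obstacle will be the upper bound in Case 3: although the decomposition $T_{n,k}=T_{n,3}\cup\bigcup B_i$ has overlapping supports at the level-$3$ vertices, Lemma~\ref{circulenttineq} (unlike Lemma~\ref{circulentteq}) handles non-disjoint supports, and the remaining arithmetic collapse is a routine but lengthy geometric-series computation in the same spirit as Proposition~\ref{lemaan}.
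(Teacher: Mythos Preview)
Your proof plan is correct and mirrors the paper's own argument essentially step for step: the same short exact sequence at $x_1^{(0)}$, the same reduction to Proposition~\ref{lemaan} via Lemma~\ref{regul}(b)/(c), the same decomposition $T_{n,k}=T_{n,3}\cup\bigcup_i B_i$ with Lemma~\ref{circulenttineq} for the upper bound when $k\equiv 0$, and the same induced matchings $F_{(k-1,k)}\cup F_{(k-4,k-3)}\cup\cdots$ for the lower bounds. One small point to tighten is the base case $k=3$: your matching of size $n(n-1)$ gives only a \emph{lower} bound, which does not by itself distinguish between the two candidates $n(n-1)$ and $n(n-1)+1$ left by Lemma~\ref{regul}(b); you must argue (exactly as the paper does for $T'_{n,3}$ in Proposition~\ref{lemaan}) that this matching is \emph{maximum}, and then use chordality in Lemma~\ref{reg1} to conclude $\reg=\indmat=n(n-1)$.
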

		
		\begin{proof}
			For $1\leq k\leq 3$, the result holds by a similar argument of  Proposition \ref{lemaan} and we have $\reg(R/(I(T_{n,1}))=1, \reg(R/(I(T_{n,2}))=n$ and  $ \reg(R/(I(T_{n,3}))=n(n-1).$
		 Let $k\geq 4.$ Since 			$R/(I(T_{n,k}):x^{(0)}_{1}) \cong \mathbb{M}^{n(n-1)}_{n,k-2}\tensor_{K} K[x^{(0)}_{1}]$	and  $R/(I(T_{n,k}),x^{(0)}_{1})\cong\mathbb{M}^{n}_{n,k-1}$. By 
		 Lemmas \ref{nl} and \ref{nl}, we have
			\begin{equation}\label{Eq.13}		
				\reg(R/(I(T_{n,k}):x^{(0)}_{1})) = n(n-1)\reg (\mathbb{M}_{n,k-2}),	
			\end{equation}
			\begin{equation}\label{Eq.14}
				\reg (R/(I(T_{n,k}),x^{(0)}_{1})) = n\reg(\mathbb{M}_{n,k-1}).		\end{equation}		 We consider three cases:
			\begin{description}

				\item[Case 1] Let $ k\equiv 1\, (\mod \, 3)$.  In this case  $ k-1\equiv 0\, (\mod \, 3)$, $ k-2\equiv 2\, (\mod \, 3),$ by Eqs. \ref{Eq.13} and \ref{Eq.14} and  using Proposition \ref{lemaan}, we get
				\begin{equation*}
					\reg (R/(I(T_{n,k}):x^{(0)}_{1})) = n(n-1)\bigg( \frac{(n-1)^{(k-2)+2}-n+1}{(n-1)^{3}-1}\bigg)=\frac{n(n-1)^{k+1}-n(n-1)^{2}}{(n-1)^{3}-1},	\end{equation*}
				\begin{equation*}
					\label{regmmm}
					\reg (R/(I(T_{n,k}),x^{(0)}_{1})) = n\bigg( \frac{(n-1)^{(k-1)+2}-(n-1)^{2}}{(n-1)^{3}-1}\bigg)=\frac{n(n-1)^{k+1}-n(n-1)^{2}}{(n-1)^{3}-1}.
				\end{equation*}
		Since $\reg(R/(I(T_{n,k}):x^{(0)}_{1}))=\reg(R/(I(T_{n,k}),x^{(0)}_{1}))$, by  Lemma \ref{regul}(b),	
				$\reg(R/I(T_{n,k})) \leq \reg(R/(I(T_{n,k}):x^{(0)}_{1}))+1=\frac{n(n-1)^{k+1}-n(n-1)^{2}+(n-1)^{3}-1}{(n-1)^{3}-1}=\frac{n(n-1)^{k+1}-(n-1)^{2}-1}{(n-1)^{3}-1}.$	For the other inequality, let us define
				\begin{equation}\label{maininduce}
					F_{(k-1,k)}= \underset{i=1}{\overset{ n(n-1)^{k-2}}{\bigcup}}\{\{x_{i}^{(k-1)},x_{(n-1)i}^{(k)}\}\},
				\end{equation} where
				$|F|=n(n-1)^{k-2} .$ 
	Consider 	$F=F_{(k-1,k)}\cup F_{(k-4,k-3)}\cup \dots \cup F_{(0,1)}$	be an induced matching.		 Therefore, $	\indmat(T_{n,k})\geq|F|, $	where   	$		|F|=n(n-1)^{k-2}+n(n-1)^{k-5}+\dots+ n(n-1)^{2}+1= \frac{n(n-1)^{k+1}-(n-1)^{2}-1}{(n-1)^{3}-1}. $  	By Lemma \ref{reg1}, 
				$	
				\reg(R/I(T_{n,k})) \geq \frac{n(n-1)^{k+1}-(n-1)^{2}-1}{(n-1)^{3}-1}.$
				Thus   $\reg(R/I(T_{n,k}))=\frac{n(n-1)^{k+1}-(n-1)^{2}-1}{(n-1)^{3}-1},$ as required.  
				\item[Case 2] Let $ k\equiv 2\, (\mod \, 3).$ 
			In this case $ k-1\equiv 1\, (\mod \, 3)$, $ k-2\equiv 0\, (\mod \, 3),$ by using Proposition \ref{lemaan} and Eqs. \ref{Eq.13}  and \ref{Eq.14} we get
				
				\begin{equation*}
					\reg (R/(I(T_{n,k}):x^{(0)}_{1})) = n(n-1)\bigg(\frac{(n-1)^{(k-2)+2}-(n-1)^{2}}{(n-1)^{3}-1}\bigg)=\frac{n(n-1)^{k+1}-n(n-1)^{3}}{(n-1)^{3}-1},
				\end{equation*} and	
				\begin{equation*}
					\reg (R/(I(T_{n,k}),x^{(0)}_{1})) = n\bigg( \frac{(n-1)^{(k-1)+2}-1}{(n-1)^{3}-1}\bigg)=\frac{n(n-1)^{k+1}-n(n-1)^{3}}{(n-1)^{3}-1}+n.\end{equation*} 
		
				Therefore by Lemma \ref{regul}(c), we have
				$\reg(R/I(T_{n,k}))=\frac{n(n-1)^{k+1}-n}{(n-1)^{3}-1},$   as required.
					\item[Case 3] Let $ k\equiv 0\, (\mod \, 3)$. Here $T_{n,k}=T_{n,3} \underset{i=1}{\overset{n(n-1)^{2}}{ \bigcup B_{i}}},$ where $B_{i}\cong T'_{n,k-3}$, $B_{i}\cap B_{j}=\emptyset$ and $T_{n,3}\cap B_{i}\neq \emptyset $ for all $i\neq j.$ 
By Lemmas \ref{circulenttineq} and \ref{nl},  we have
\begin{multline*}
     \reg(R/I(T_{n,k}))\leq\reg (K[V({T_{n,3}})]/I(T_{n,3}))+\reg (\mathbb{M}^{n(n-1)^{2}}_{n,k-3})\\ = \reg (K[V({T_{n,3}})]/I(T_{n,3}))+n(n-1)^{2}\reg (\mathbb{M}_{n,k-3}).
\end{multline*}
			In this case  $ k-3\equiv 0\, (\mod \, 3).$	As $\reg(R/I(T_{n,3}))=n(n-1)$, by  Proposition \ref{lemaan} we get
				\begin{equation*}
					\reg (R/I(T_{n,k})) \leq n(n-1)+ n(n-1)^{2}\bigg( \frac{(n-1)^{(k-3)+2}-(n-1)^{2}}{(n-1)^{3}-1}\bigg)=\frac{n(n-1)^{k+1}-n(n-1)}{(n-1)^{3}-1}.\end{equation*}
		For the other inequality,	we use  Eq. \ref{maininduce} and define    
				$F=F_{(k-1,k)}\cup F_{(k-4,k-3)}\cup \dots \cup F_{(2,3)}.$  
				It is easily seen that $F$ is an induced matching. Therefore,
				$	\indmat(T_{n,k})\geq|F|, $
				where    $		|F|=n(n-1)^{k-2}+n(n-1)^{k-5}+\dots +n(n-1)^{4}+n(n-1)= \frac{n(n-1)^{k+1}-n(n-1)}{(n-1)^{3}-1}.  $
			By Lemma \ref{reg1}, we have
				$		\reg (R/I(T_{n,k})) \geq \frac{n(n-1)^{k+1}-n(n-1)}{(n-1)^{3}-1}.$
				This completes the proof.
			\end{description}
		\end{proof}
		
		\begin{Lemma}\label{lem3k}
			Let $n\geq 3$ and $k\geq 1$. If $W\subset V(T_{n,k})$ be an independent set, then $W$ is maximal iff $ W\supseteq L_{q}$, for all $q$ with $k\equiv  q\,(\mod 2)$. \end{Lemma}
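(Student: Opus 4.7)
Let $A := \bigcup_{q \equiv k \pmod{2}} L_q$. The plan is to identify $A$ as the unique largest independent set of $T_{n,k}$; the iff then follows because an independent $W$ satisfies $W \supseteq L_q$ for all such $q$ precisely when $W = A$. As a preliminary observation, $A$ itself is independent: edges of $T_{n,k}$ connect only vertices in consecutive layers $L_a$, $L_{a+1}$, which have opposite parity, so no two vertices of $A$ are adjacent.

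The core work is the forward direction: any maximum independent set $W$ must contain every $L_q$ with $q \equiv k \pmod{2}$. I would prove this by a backward layer-by-layer induction starting from the leaves. For the base step $L_k \subseteq W$, suppose $v \in L_k \setminus W$; its unique neighbor is its parent $u \in L_{k-1}$, and either $u \notin W$ (in which case $W \cup \{v\}$ is independent, contradicting maximality) or $u \in W$, in which case the swap $W' := (W \setminus \{u\}) \cup C$, where $C$ is the set of leaves adjacent to $u$, produces an independent set with $|W'| - |W| = |C| - 1 \geq n - 2 \geq 1$, again a contradiction. Here $|C| \geq n - 1$ because every non-leaf vertex of $T_{n,k}$ has at least $n - 1$ children.

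The inductive step mirrors the base case. Granted $L_{k - 2m} \subseteq W$, independence forces $L_{k - 2m - 1} \cap W = \emptyset$, since each vertex of $L_{k - 2m - 1}$ has a child already in $W$. Then $L_{k - 2m - 2} \subseteq W$ (when this layer exists) by the same two-case swap argument applied to $v \in L_{k - 2m - 2}$, using that the children of any vertex of $L_{k-2m-2}$ already lie outside $W$ by the preceding half-step. The boundary case $k - 2m - 2 = 0$ (arising when $k$ is even) is handled directly, since the root has no parent and only the direct-addition half of the argument is needed.

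The reverse direction is then a short rigidity check: if $W$ is independent and $W \supseteq A$, then $W = A$, because any alleged $v \in W \setminus A$ lies in some $L_p$ with $p \not\equiv k \pmod{2}$, and then every neighbor of $v$ (in $L_{p-1}$ or $L_{p+1}$) lives in a layer of parity $\equiv k \pmod{2}$, hence in $A \subseteq W$, contradicting independence. Combined with the forward direction, $A$ is the unique largest independent set, so the layer-containment condition characterizes the maximum independent sets. The main delicacy throughout is verifying in each swap that $W' = (W \setminus \{u\}) \cup (\text{children of } u)$ is independent, i.e.\ that no child of $u$ has any other neighbor in $W$; this is immediate at the leaves, and at deeper levels it follows precisely because those other potential neighbors lie in the layer $L_{k - 2m - 1}$ that the preceding step of the induction has just pushed out of $W$.
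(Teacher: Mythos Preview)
Your argument is correct and considerably more complete than the paper's own. The paper's proof of this lemma is essentially a one-sentence assertion: it records that edges join only consecutive layers and that $|L_q|=(n-1)|L_{q-1}|$ for $q\geq 2$, and then simply states that therefore the maximum independent set is $L_k\cup L_{k-2}\cup\cdots$. No swap or exchange argument is written out. Your backward layer-by-layer induction, with the explicit swap $W'=(W\setminus\{u\})\cup(\text{children of }u)$ and the verification that $|W'|>|W|$ using $n\geq 3$, is exactly what is needed to turn that assertion into a proof; the growth condition $|L_q|=(n-1)|L_{q-1}|$ the paper cites is precisely what makes your swap strictly improving. Your rigidity check for the reverse direction, and your explicit handling of the root when $k$ is even, likewise fill in details the paper omits. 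In short: same destination and same underlying reason, but you supply the mechanism the paper leaves to the reader.
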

		\begin{proof} By definition of $T_{n,k}$, $E(T_{n,k})\cap\{\{u,v\}:u\in L_{q}, v\in L_{q-1}\}\neq \emptyset$ for all $1\leq q\leq k$. Also, $|L_0|=1$, $|L_1|=n$ and for $q\geq 2$, $|L_{q}|=(n-1)|L_{q-1}|.$ Thus $W\subset V(T_{n,k})$ is a maximal independent set, if and only if $W=L_{k}\cup L_{k-2}\cup\dots,\cup L_{k-2\lceil \frac{k-1}{2}\rceil}$, that is, if and only if $W\supseteq L_{q}$, for all $q$ with $k\equiv  q\,(\mod 2)$.
		\end{proof}
		
		\begin{Theorem}
			Let $n\geq 3$ and $k\geq 1$. If $R=K[V(T_{n,k})]$, then	$\dim(R/I(T_{n,k}))=\frac{(n-1)^{k+1}-1}{n-2}.$
		\end{Theorem}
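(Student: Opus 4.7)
The plan is to combine the two preparatory lemmas just proved: Lemma \ref{prok} reduces $\dim(R/I(T_{n,k}))$ to the independence number of $T_{n,k}$, and Lemma \ref{lem3k} explicitly identifies the maximal independent sets as the unions $L_k \cup L_{k-2} \cup L_{k-4} \cup \cdots$ of level sets of the same parity as $k$. So the entire problem becomes a finite geometric summation using the known cardinalities $|L_0|=1$ and $|L_q|=n(n-1)^{q-1}$ for $q\geq 1$.

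I would split into two cases according to the parity of $k$. If $k$ is even, the maximal independent set has size
\[
1+\sum_{j=1}^{k/2} n(n-1)^{2j-1} = 1 + n(n-1)\cdot\frac{(n-1)^{k}-1}{(n-1)^{2}-1} = 1+\frac{(n-1)^{k+1}-(n-1)}{n-2} = \frac{(n-1)^{k+1}-1}{n-2},
\]
using $(n-1)^{2}-1=n(n-2)$. If $k$ is odd, the sum is
\[
\sum_{j=0}^{(k-1)/2} n(n-1)^{2j} = n\cdot\frac{(n-1)^{k+1}-1}{(n-1)^{2}-1} = \frac{(n-1)^{k+1}-1}{n-2}.
\]
Both cases collapse to the same closed form, which is exactly the desired value.

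I do not anticipate any genuine obstacle: once Lemma \ref{lem3k} has pinned down the maximal independent set, the proof is just bookkeeping with a geometric series, and the simplification $(n-1)^2-1=n(n-2)$ makes the two parity cases yield identical expressions. The only place where a little care is required is that in the even-$k$ case one must remember to include the root level $L_0$ (which contributes the single additional vertex $x_1^{(0)}$), whereas in the odd-$k$ case the summation starts at $L_1$; the arithmetic above shows both lead to the same answer.
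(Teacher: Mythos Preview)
Your proposal is correct and essentially identical to the paper's own proof: both invoke Lemma~\ref{prok} to reduce to the independence number, apply Lemma~\ref{lem3k} to identify the maximum independent set as $L_k\cup L_{k-2}\cup\cdots$, split on the parity of $k$, and sum the level cardinalities. Your write-up is in fact slightly cleaner, since you use the geometric-series formula and the identity $(n-1)^2-1=n(n-2)$ explicitly, whereas the paper simply asserts the final closed forms.
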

		\begin{proof}
			By Lemma \ref{prok}, we know that  $\dim(R/I(T_{n,k}))=|W|$, where $W$ is a maximal independent set. Now by Lemma \ref{lem3k}, if $W$ is a maximal independent set then 
			\begin{equation*}
				W=\left\{\begin{matrix}
					L_k\cup L_{k-2} \cup \dots \cup L_0, & if\,\,\, \text{$k$ is even};\\ 
					\\L_k\cup L_{k-2} \cup\dots\cup L_1, & if \,\,\,\text{$k$ is odd}.
				\end{matrix}\right. 
			\end{equation*}
			Thus
			\begin{equation*}
				|W|=\left\{\begin{matrix}
					n(n-1)^{k-1}+n(n-1)^{k-3}+\dots+n(n-1)+1, & if \,\,\, \text{$k$ is even};\\ 
					\\n(n-1)^{k-1}+n(n-1)^{k-3}+\dots+n(n-1)^{2}+n, &if \,\,\, \text{$k$ is odd}.
				\end{matrix}\right. 
			\end{equation*}
			If $k$ is odd, then
			$$n+ n(n-1)^{2}+\dots +n(n-1)^{k-5} +n(n-1)^{k-3}+n(n-1)^{k-1}=\frac{(n-1)^{k+1}-1}{n-2},$$
			and if $k$ is even, then
			$$1+n(n-1)+\dots +n(n-1)^{k-5} +n(n-1)^{k-3}+	n(n-1)^{k-1}=\frac{(n-1)^{k+1}-1}{n-2}.$$

		\end{proof}

	\end{document}